\numberwithin{equation}{section} 
\newtheorem{Thm}{Theorem}
\newtheorem{Lem}[Thm]{Lemma}
\newtheorem{Def}[Thm]{Definition}
\newtheorem{Pro}[Thm]{Proposition}
\newtheorem{Prob}[Thm]{Problem}
\newtheorem{Note}[Thm]{Notation}
\newcommand{\bb}[1]{\mathbb{ #1}}
\bmdefine\Bone{1}
\newcommand{\bra}[1]{\overline{#1}}
\newcommand{\hf}{\displaystyle\frac{1}{2}}
\newcommand{\nth}[1]{\displaystyle\frac{1}{#1}}
\newcommand{\Tld}[1]{\widetilde{#1}}
\def\XXint#1#2#3{{\setbox0=\hbox{$#1{#2#3}{\int}$ }
\vcenter{\hbox{$#2#3$ }}\kern-.6\wd0}}
\newcommand{\rhs}{right-hand side}
\newcommand{\Ga}{\alpha}
\newcommand{\Gl}{\lambda}
\newcommand{\Gs}{\sigma}
\newcommand{\Gz}{\zeta}
\bmdefine\BGa{\alpha}
\bmdefine\BGb{\beta}
\bmdefine\BGd{\delta}
\bmdefine\BGe{\epsilon}
\bmdefine\BGve{\varepsilon}
\bmdefine\BGf{\phi}
\bmdefine\BGvf{\varphi}
\bmdefine\BGg{\gamma}
\bmdefine\BGc{\chi}
\bmdefine\BGi{\iota}
\bmdefine\BGk{\kappa}
\bmdefine\BGl{\lambda}
\bmdefine\BGn{\eta}
\bmdefine\BGm{\mu}
\bmdefine\BGv{\nu}
\bmdefine\BGp{\pi}
\bmdefine\BGth{\theta}
\bmdefine\BGvth{\vartheta}
\bmdefine\BGr{\rho}
\bmdefine\BGvr{\varrho}
\bmdefine\BGs{\sigma}
\bmdefine\BGvs{\varsigma}
\bmdefine\BGt{\tau}
\bmdefine\BGj{\tau}
\bmdefine\BGu{\upsilon}
\bmdefine\BGo{\omega}
\bmdefine\BGx{\xi}
\bmdefine\BGy{\psi}
\bmdefine\BGz{\zeta}
\bmdefine\BGD{\Delta}
\bmdefine\BGF{\Phi}
\bmdefine\BGG{\Gamma}
\bmdefine\BGL{\Lambda}
\bmdefine\BGP{\Pi}
\bmdefine\BGT{\Theta}
\bmdefine\BGS{\Sigma}
\bmdefine\BGU{\Upsilon}
\bmdefine\BGO{\Omega}
\bmdefine\BGX{\Xi}
\bmdefine\BGY{\Psi}
\bmdefine\BFM{\mathfrak{M}}
\bmdefine\BFb{\mathfrak{b}}
\bmdefine\BFk{\mathfrak{k}}
\bmdefine\BFm{\mathfrak{m}}
\bmdefine\BFu{\mathfrak{u}}
\bmdefine\BFv{\mathfrak{v}}
\newcommand{\CM}{{\mathcal M}}
\newcommand{\CS}{{\mathcal S}}
\newcommand{\CW}{{\mathcal W}}
\bmdefine\BCA{{\mathcal A}}
\bmdefine\BCB{{\mathcal B}}
\bmdefine\BCC{{\mathcal C}}
\bmdefine\BCD{{\mathcal D}}
\bmdefine\BCE{{\mathcal E}}
\bmdefine\BCF{{\mathcal F}}
\bmdefine\BCG{{\mathcal G}}
\bmdefine\BCH{{\mathcal H}}
\bmdefine\BCI{{\mathcal I}}
\bmdefine\BCJ{{\mathcal J}}
\bmdefine\BCK{{\mathcal K}}
\bmdefine\BCL{{\mathcal L}}
\bmdefine\BCM{{\mathcal M}}
\bmdefine\BCN{{\mathcal N}}
\bmdefine\BCO{{\mathcal O}}
\bmdefine\BCP{{\mathcal P}}
\bmdefine\BCQ{{\mathcal Q}}
\bmdefine\BCR{{\mathcal R}}
\bmdefine\BCS{{\mathcal S}}
\bmdefine\BCT{{\mathcal T}}
\bmdefine\BCU{{\mathcal U}}
\bmdefine\BCV{{\mathcal V}}
\bmdefine\BCW{{\mathcal W}}
\bmdefine\BCX{{\mathcal X}}
\bmdefine\BCY{{\mathcal Y}}
\bmdefine\BCZ{{\mathcal Z}}
\bmdefine\Bzr{ 0}
\bmdefine\Ba{ a}
\bmdefine\Bb{ b}
\bmdefine\Bc{ c}
\bmdefine\Bd{ d}
\bmdefine\Be{ e}
\bmdefine\Bf{ f}
\bmdefine\Bg{ g}
\bmdefine\Bh{ h}
\bmdefine\Bi{ i}
\bmdefine\Bj{ j}
\bmdefine\Bk{ k}
\bmdefine\Bl{ l}
\bmdefine\Bm{ m}
\bmdefine\Bn{ n}
\bmdefine\Bo{ o}
\bmdefine\Bp{ p}
\bmdefine\Bq{ q}
\bmdefine\Br{ r}
\bmdefine\Bs{ s}
\bmdefine\Bt{ t}
\bmdefine\Bu{ u}
\bmdefine\Bv{ v}
\bmdefine\Bw{ w}
\bmdefine\Bx{ x}
\bmdefine\By{ y}
\bmdefine\Bz{ z}
\bmdefine\BA{ A}
\bmdefine\BB{ B}
\bmdefine\BC{ C}
\bmdefine\BD{ D}
\bmdefine\BE{ E}
\bmdefine\BF{ F}
\bmdefine\BG{ G}
\bmdefine\BH{ H}
\bmdefine\BI{ I}
\bmdefine\BJ{ J}
\bmdefine\BK{ K}
\bmdefine\BL{ L}
\bmdefine\BM{ M}
\bmdefine\BN{ N}
\bmdefine\BO{ O}
\bmdefine\BP{ P}
\bmdefine\BQ{ Q}
\bmdefine\BR{ R}
\bmdefine\BS{ S}
\bmdefine\BT{ T}
\bmdefine\BU{ U}
\bmdefine\BV{ V}
\bmdefine\BW{ W}
\bmdefine\BX{ X}
\bmdefine\BY{ Y}
\bmdefine\BZ{ Z}
\begin{document}

\title{Complete characterization of symmetric Kubo-Ando operator means satisfying  Moln{\'a}r's weak associativity}
\date{\today}
\author{Yury Grabovsky\footnote{Email: yury@temple.edu}\\Temple University\\Department of Mathematics\\Philadelphia, PA, USA 19122\\\\ Graeme W.\ Milton\footnote{Email: graeme.milton@utah.edu}\\University of Utah\\Department of Mathematics\\Salt Lake City, UT, USA 84112\\\\ Aaron Welters\footnote{Email: awelters@fit.edu}\\Florida Institute of Technology\\Department of Mathematics and Systems Engineering\\Melbourne, FL, USA 32901}

\maketitle

\begin{abstract}
   We provide a complete characterization of a subclass of weakly associative means of positive operators in the class of symmetric Kubo-Ando means. This class, which includes the geometric mean, was first introduced and studied in L.\ Moln{\'a}r, \textit{Characterizations of certain means of positive operators}, Linear Algebra Appl. 567 (2019) 143-166, where
   he gives a characterization of this subclass (which we call the Moln{\'a}r class of means) in terms of the properties of their representing operator monotone functions. Moln\'ar's paper leaves open the problem of determining if the geometric mean is the only such mean in that subclass. Here we give a negative answer to this question 
   by constructing an order-preserving bijection between this class and a class of real measurable odd periodic functions bounded in absolute value by 1/2. Each member of the latter class defines a Moln{\'a}r mean by an explicit exponential-integral representation. From this we are able to understand the order structure of the Moln\'ar class and construct several infinite families of explicit examples of Moln{\'a}r means that are not the geometric mean. 
   Our analysis also shows how to modify Moln{\'a}r’s original characterization so that the geometric mean is the only one satisfying the requisite set of properties.
\end{abstract}

\textit{Keywords:} Arithmetic-harmonic-geometric means, Kubo-Ando means, operator monotone functions, Herglotz-Nevanlinna functions, Moln{\'a}r class of means\\
\indent\textit{2020 MSC:} primary 47A64, 47B90; secondary 26E60

\sloppy

\section{Introduction}

In this paper, following \cite{19LM},  we adopt the following notations:
\begin{itemize}
\item $H$ always denotes a complex Hilbert space. 
\item $B(H)$ - the set of all bounded linear operators on $H$.
\item $B(H)^{+}$ - the set of all positive semidefinite operators in $B(H)$.
\item  $B(H)^{++}$ - the set of all invertible operators in $B(H)^{+}$.
\item $I$ denotes the identity operator on $H$.
\item The order relation $A\leq B$ for $A,B\in B(H)^{+}$ means $B-A\in B(H)^{+}$ [i.e., $\leq $ is the Loewner order on $B(H)^{+}$].
\item We write $A_n\downarrow A$ if $(A_n)$ is a monotonically decreasing sequence in $B(H)^{+}$, i.e., $A_1\geq A_2\geq \ldots$, and $A_n$ converges strongly to $A$ in $B(H)$. 
\end{itemize}

Following \cite{80KA} (see also \cite{10FH} and  \cite[Chap.\ 36 and 37]{19BS}), a binary operation 
\begin{gather*}
    \sigma:B(H)^{+}\times B(H)^{+}\rightarrow B(H)^{+},\;(A,B)\mapsto\sigma (A,B)=:A\sigma B
\end{gather*}
is called a \textit{connection}\footnote{The term comes from the study of connections of elements in an electrical network \cite{69AD,75AT}.} on $B(H)^{+}$ if the following requirements are fulfilled:
\begin{itemize}
    \item[(I)] (Joint Monotonicity) $A\leq C$ and $B\leq D$ imply $A\sigma B\leq C\sigma D$;
    \item[(II)] (Transformer Inequality) $C(A\sigma B)C\leq (CAC)\sigma (CBC)$;
    \item[(III)] (Upper Semicontinuity) $A_n\downarrow A$ and $B_n\downarrow B$ imply $A_n\sigma B_n\downarrow A\sigma B$.
\end{itemize}
Moreover, a connection $\sigma$ is called a \textit{mean} (or, more precisely, a \textit{Kubo-Ando mean}) or \textit{symmetric}, respectively, if
\begin{itemize}
    \item[(IV)] (Normalization) $I\sigma I=I$;
    \item[(V)] (Permutation Symmetry) $A\sigma B=B\sigma A,\; \forall A,B\in B(H)^{+}$.
\end{itemize}
In particular, a \textit{symmetric Kubo-Ando mean} is any binary operation $\sigma$ on $B(H)^{+}$ satisfying (I)-(V). 

Three important examples of symmetric Kubo-Ando means (which motivated the axiomatic approach of \cite{80KA} to positive operator means) are the \textit{arithmetic mean} $\nabla$, \textit{harmonic mean} $!$, and \textit{geometric mean} $\#$  (see also \cite{75PW, 01LL, 04ALM} and the recent survey \cite{24LL}) which, for all $A,B\in B(H)^{++}$, are given by:
\begin{gather}
    A\nabla B=\frac{1}{2}(A+B);\label{def:ArithmMean}\\
    A!B=2\left(A^{-1}+B^{-1}\right)^{-1};\label{def:HarmMean}\\
    A\#B=A^{1/2}(A^{-1/2}BA^{-1/2})^{1/2}A^{1/2}.\label{def:GeomMean}
\end{gather}
The values of these means at non-invertible elements of $B(H)^{+}$ can be determined from (III) by passing to the limit of monotonically decreasing sequences from $B(H)^{++}$ (see, e.g., \cite{80KA}). For instance, if $A,B\in B(H)^{+}$ then the formula for $A\nabla B$ is still (\ref{def:ArithmMean}), whereas if $A\in B(H)^{++}, B\in B(H)^{+}$ then the formula for $A\#B$ is still (\ref{def:GeomMean}). But in general, if $A,B\in B(H)^{+}$ then (\ref{def:HarmMean}) and (\ref{def:GeomMean}) become 
\begin{gather}
    A!B=2\lim_{n\rightarrow \infty}\left(A_n^{-1}+B_n^{-1}\right)^{-1},\label{def:HarmMeanLimitDef}\\
    A\#B=\lim_{n\rightarrow \infty}A_n^{1/2}(A_n^{-1/2}B_nA_n^{-1/2})^{1/2}A_n^{1/2},\notag
\end{gather}
for any two sequences $(A_n), (B_n)$ in $B(H)^{++}$ such that $A_n\downarrow A$ and $B_n\downarrow B$.  

Aside from the factor of $2$, the right hand side of \eqref{def:HarmMeanLimitDef} has been given the name  `parallel sum.' More precisely, the parallel sum is denoted by $:$ and gives another well-known example of a connection:
\begin{gather}
    A:B=\frac{1}{2}A!B, \text{ for } A,B\in B(H)^+.\label{def:ParallelSum}
\end{gather}
The concept of the parallel sum was introduced for positive semidefinite matrices in \cite{69AD} and extended to bounded (positive semidefinite) linear operators in \cite{75AT} (see also \cite{80KA, 76NA}).

As shown in \cite{80KA}, there is an important relationship between connections, Loewner's theory on operator-monotone functions, and the properties of a special class of analytic functions called Herglotz functions \cite{00GT, 01GKMT, 14GT} (also called Nevanlinna \cite{09ADL, 15AL}, Herglotz-Nevanlinna \cite{11ABT, 19LN, 22LO, 22OL}, Pick \cite{56AD, 64AD, 74WD, 08CB}, or $R$-functions \cite{51EW, 52EW, 54WN, 74KK}, \cite[Appendix]{77KN}). Let us briefly elaborate on this. 

First, a function $f$ is called a \textit{Herglotz function} if $f:\mathbb{C}^+\rightarrow \mathbb{C}^+\cup \mathbb{R}$ is analytic, where $\mathbb{C}^+=\{z\in \mathbb{C}:\operatorname{Im}z>0\}$ denotes the open upper half-plane. Next, let us introduce the following notation. 
\begin{Note}
Let $OM_+$ denote\footnote{The restrictions of functions in $OM_+\setminus\{0\}$ to the complex upper half-plane is denoted by $\CS([-\infty,0])$ in \cite[Appendix, Sec.\ 4]{77KN}. The functions in this class, and therefore, in $OM_+\setminus\{0\}$, have an exponential-integral representation \cite[Appendix, Sec.\ 5]{77KN} that will play an important role in our paper.} the class of all analytic functions $f:\mathbb{C}\setminus(-\infty,0]\rightarrow \mathbb{C}$ satisfying $f(x)\geq 0$ if $x\in \mathbb{R}$ with $x>0$ and $\operatorname{Im}f(z)\geq 0$ if $z\in \mathbb{C}$ with $\operatorname{Im} z>0$.     
\end{Note}

In particular, if $f\in OM_+$ then its restriction $f|_{\mathbb{C}^+}:\mathbb{C}^+\rightarrow \mathbb{C}^+\cup \mathbb{R}$ to $\mathbb{C}^+$ is a Herglotz function. Next, a function 
\begin{gather*}
f:(0,\infty)\rightarrow [0,\infty)    
\end{gather*}
is called a \textit{(positive) operator monotone function} if, for every Hilbert space $H$,
\begin{gather*}
    A,B\in B(H)^{++},\;A\leq B \Rightarrow f(A)\leq f(B),
\end{gather*}
where $f$ is defined on $B(H)^{+}$ by the functional calculus for self-adjoint operators on $H$  \cite{80RS, 14GT}. A deep result of Loewner \cite{34KL} (see also \cite{74WD, 19BS}) says that every positive operator monotone function $g$ has a unique analytic continuation to a function $f\in OM_+$ [or, equivalently, $f(x)=g(x)$ for all $x>0$] and conversely. \textit{Given this correspondence between positive operator monotone functions and the elements of $OM_+$, we will abuse notation throughout the rest of this paper and not distinguish them unless necessary.}

More precisely, the map $f\mapsto f|_{(0,\infty)}$ sending $f\in OM_+$ to its restriction $f|_{(0,\infty)}:(0,\infty)\rightarrow [0,\infty)$ is a bijection of $OM_+$ onto the class of positive operator monotone functions. Furthermore, the map $m\mapsto f$, defined by
\begin{gather}
    f(z)=a+bz+\int_{(0,\infty)}\frac{z(1+\lambda)}{z+\lambda}dm(\lambda),\;\text{for } z\in \mathbb{C}\setminus(-\infty,0],\label{OperMonoFuncIntegralRepr}
\end{gather}
where $a=m(\{0\})$ and $b=m(\{\infty\})$, establishes a bijection between the class of finite (positive) Borel measures on $[0,\infty]$ onto this class $OM_+$ of functions.

Now a result of \cite{80KA} says that positive operator monotone functions are in a bijective correspondence with connections, whereby a positive operator monotone function $f$ gives rise to a connection $\sigma=\sigma_f$ on $B(H)^{+}$ by the formula 
\begin{gather*}
    A\sigma B=A^{1/2}f(A^{-1/2}BA^{-1/2})A^{1/2},
\end{gather*}
for all $B\in B(H)^{+}, A\in B(H)^{++}$ which is extended uniquely to all $B(H)^{+}$ by property (III). Conversely, function $f$ can be recovered from a connection $\sigma$ by the formula
\begin{gather}
    f(x)I=I\sigma (xI),\; \text{for }x>0.\label{formula:RepresFuncToConn}
\end{gather}
The function $f$ is called the \textit{representing function} of $\sigma$. Furthermore, if $H$ is infinite dimensional then by a deep result of \cite{80KA}, each and every connection $\sigma$ on $B(H)^{+}$ arises in this way, i.e., $\sigma=\sigma_f$ for a unique operator monotone function $f$. Moreover, by \cite[Theorem 3.4]{80KA} (see also \cite[Lemma 3.1]{80KA}) it is known that, in terms of the representation \eqref{OperMonoFuncIntegralRepr},
\begin{gather*}
    A\sigma B =aA+bB+ \int_{(0,\infty)}\frac{1+\lambda}{\lambda}[(\lambda A):B]dm(\lambda),\;\text{for } A,B\in B(H)^+,
\end{gather*}
where $a=m(\{0\})$, $b=m(\{\infty\})$, $:$ denotes the parallel sum as def.\ by \eqref{def:ParallelSum}, and this establishes a bijection, $m\mapsto \sigma$, between the class of finite (positive) Borel measures on $[0,\infty]$ onto the class of connections.

For instance, by \eqref{formula:RepresFuncToConn} and \cite[Corollary 4.2]{80KA}, it is known that
a representing function $f$ corresponding to a connection $\sigma$ that is a mean or symmetric is one which satisfies, respectively,
\begin{gather*}
    f(1)=1;
    \quad f(x)=xf\left(\frac{1}{x}\right),\;\text{for }x>0.
\end{gather*}
For example, the representing functions $f_{\nabla}, f_!, f_{\#}$ of the arithmetic mean $\nabla,$ harmonic mean $!$, and geometric mean $\#$, respectively, are given by
\begin{gather}
\label{fnabla}
    f_{\nabla}(x)=\frac{1}{2}(1+x);\\
    \label{f!}
    f_!(x)=\frac{2x}{1+x};\\
    f_{\#}(x)=\sqrt{x}.
    \label{fgeom}
\end{gather}

A natural question arises that was considered by Kubo-Ando \cite[Sec.\ 4 and 5]{80KA} and, more recently, by L.\ Moln{\'a}r \cite{19LM}: What properties of a symmetric Kubo-Ando mean $\sigma=\sigma_f$ characterize it as the arithmetic mean $\nabla$, harmonic mean $!$, or geometric mean $\#$? Equivalently, in terms of representing functions $f$ (or, equivalently, on operator monotone functions $f$), what are necessary and sufficient conditions that guarantee $f\in\{f_{\nabla}, f_!, f_{\#}\}$? 

To address this question,  L.\ Moln{\'a}r \cite{19LM} considered an algebraic characterization of those means in terms of a weak form of an associativity law of a binary operation on $B(H)^{++}$. His main results in this regard can be summarized as follows (see \cite[Theorems 6 and 8]{19LM}).
\begin{Thm}[L.\ Moln{\'a}r]\label{thm:MolnarOpenProbRelThm}
    Let $H$ be a complex Hilbert space with $\dim H\geq 2$ and $\sigma $ be a symmetric Kubo-Ando mean on $B\left( H\right) ^{++}$ with representing operator monotone function $f$. Assume that there exists a continuous strictly
    increasing and surjective function $g:(0,\infty )\rightarrow (0,\infty )$
    such that the operation
    \begin{gather*}
        \diamond :\left( A,B\right) \mapsto g\left( A\sigma
    B\right) ,\;\text{for }A,B\in B\left( H\right) ^{++}
    \end{gather*}
    is either associative, i.e.,
    \begin{equation}
        \left( A\diamond C\right) \diamond B=A\diamond \left( C\diamond B\right) ,%
    \text{ }\forall A,B,C\in B\left( H\right) ^{++},\label{StrongFormAssoc}
\end{equation}%
or satisfies the weaker form of associativity
\begin{equation}
\left( A\diamond I\right) \diamond B=A\diamond \left( I\diamond B\right) ,%
\text{ }\forall A,B\in B\left( H\right) ^{++}.  \label{Thm6AssocId}
\end{equation}%
If (\ref{StrongFormAssoc}) is satisfied then $\sigma$ is the arithmetic or harmonic mean. On the other hand, if it satisfies (\ref{Thm6AssocId}) then either we have $g\left( f\left( x\right) \right) =x$, $x>0$ [meaning
that $A\diamond I=I\diamond A=A$, $A\in B\left( H\right) ^{++}$] or we have one of the following three possibilities:

\begin{itemize}
\item[(a)] there is a positive scalar $c\not=1$ such that $f\left(
c^{2}x\right) =cf\left( x\right) $, for $x>0$;

\item[(b)] $\sigma $ is the arithmetic mean;

\item[(c)] $\sigma $ is the harmonic mean.
\end{itemize}
\end{Thm}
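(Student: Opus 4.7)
My plan is to lift both associativity conditions to equivalent operator identities on $B(H)^{++}$ and then combine the resulting operator equation with its scalar specialisation. Setting $h:=g\circ f$, the Kubo-Ando identity $A\sigma I=f(A)$ (from functional calculus and the symmetry $f(x)=xf(1/x)$) together with the strict monotonicity of $g$ shows that the weak associativity (\ref{Thm6AssocId}) is equivalent to
\begin{equation*}
h(A)\,\sigma\,B = A\,\sigma\,h(B),\qquad A,B\in B(H)^{++}.
\end{equation*}
Restricting to $A=aI$, $B=bI$ yields the scalar identity
\begin{equation*}
a\,f(h(b)/a)=b\,f(h(a)/b),\qquad a,b>0,\tag{$\ast$}
\end{equation*}
and at $b=1$ with $c:=h(1)$ one extracts the key consequence
\begin{equation*}
f(h(a))=a\,f(c/a)=c\,f(a/c).\tag{$\dagger$}
\end{equation*}
If $c=1$, then ($\dagger$) reads $f(h(a))=f(a)$, so $h=\mathrm{id}$ by strict monotonicity of $f$, which is the first alternative $g\circ f=\mathrm{id}$.

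For $c\neq1$, I would analyse the operator identity $h(A)\sigma B=A\sigma h(B)$ on non-commuting pairs to constrain $h$, and then use ($\dagger$) to read off $f$. Three admissible forms for $h$ emerge, each verifiable directly at the operator level: (i) pure dilation $h(a)=ca$, for which the identity reduces, via positive homogeneity of $\sigma$ and functional calculus on the spectrum of $A^{-1/2}BA^{-1/2}$, to the scalar condition $cf(x/c)=f(cx)$, equivalently $f(c^2x)=cf(x)$, yielding case (a); (ii) translation $h(a)=a+(c-1)$, which when inserted in ($\dagger$) together with symmetry and operator monotonicity singles out $f=(1+x)/2$, so $\sigma=\nabla$, case (b); and (iii) the Mobius map $h(a)=ca/(a+c-ac)$, which analogously forces $f=2x/(1+x)$ and $\sigma=!$, case (c).

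The central technical obstacle is showing that no other $h$ is admissible. A purely scalar analysis is too permissive: every Kubo-Ando power mean $M_q(x)=((1+x^q)/2)^{1/q}$ with $q\in[-1,1]$ produces $h(a)=(c^q+a^q-1)^{1/q}$ satisfying ($\ast$) by direct substitution (both sides reduce to $((a^q+b^q+c^q-1)/2)^{1/q}$), yet only $q\in\{-1,0,1\}$ falls under (a)-(c). Eliminating the intermediate $q$ must exploit the operator-theoretic content of $h(A)\sigma B=A\sigma h(B)$, specifically by evaluating on non-commuting $(A,B)$ and using the integral representation (\ref{OperMonoFuncIntegralRepr}) of $f\in OM_+$ to detect the incompatibility of any non-linear dependence of $h$ beyond the three Mobius types with the operator calculus of $\sigma$. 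For the strong associativity (\ref{StrongFormAssoc}), one first applies the weak result (take $C=I$), and then rules out case (a) by the three-variable identity on non-commuting triples: for the canonical example $\sigma=\#$ with $g(y)=cy^2$, the operation $A\diamond B=c(A\#B)^2$ fails associativity on non-commuting $A,B,C$ because $(A\#C)^2\neq AC$ when $[A,C]\neq0$, whereas for (b) and (c) the choices $g(y)=2y$ and $g(y)=y/2$ make $\diamond$ coincide with ordinary addition and the parallel sum, both manifestly associative on $B(H)^{++}$, leaving $\sigma\in\{\nabla,!\}$.
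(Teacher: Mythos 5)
This theorem is not proved in the paper at all: it is Moln\'ar's own result, quoted from \cite{19LM} (his Theorems 6 and 8), so the only thing to judge is whether your sketch would stand as an independent proof. It would not, because its central step is asserted rather than established. Your reduction of (\ref{Thm6AssocId}) to the operator identity $h(A)\sigma B=A\sigma h(B)$ with $h=g\circ f$, the scalar specialisation $a f(h(b)/a)=b f(h(a)/b)$, and the dichotomy $c=h(1)=1$ versus $c\neq 1$ are all fine, and the $c=1$ branch correctly yields $g\circ f=\mathrm{id}$ (using that a non-constant operator monotone $f$ is strictly increasing). But the entire content of the theorem is the claim that for $c\neq 1$ the only admissible $h$ are the dilation, the translation, and the M\"obius map, giving (a), (b), (c). You state this as ``three admissible forms for $h$ emerge,'' then name it ``the central technical obstacle,'' and your own power-mean example shows the scalar equation $(\ast)$ admits a continuum of other solutions, so the exclusion must come from evaluating $h(A)\sigma B=A\sigma h(B)$ on non-commuting pairs --- yet no such argument is given, only the suggestion to ``use the integral representation to detect the incompatibility.'' That is precisely the part of Moln\'ar's proof that cannot be waved at; without it you have verified that cases (a)--(c) are consistent, not that they are exhaustive.

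The strong-associativity branch has the same defect. Taking $C=I$ in (\ref{StrongFormAssoc}) does reduce to the weak case, but you must then eliminate, for an \emph{arbitrary} mean satisfying (a) and an \emph{arbitrary} admissible $g$, the possibility that (\ref{StrongFormAssoc}) holds, and likewise dispose of the alternative $g\circ f=\mathrm{id}$; checking that the single example $\sigma=\#$, $g(y)=cy^{2}$ fails associativity on a non-commuting triple rules out nothing beyond that one example (indeed, this paper's main point is that the class of means satisfying (a) is infinite). So the proposal is a plausible opening reduction plus a verification of sufficiency, but the necessity argument --- the heart of the theorem --- is missing, and for it one must consult \cite{19LM}.
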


He also proves (see \cite[pp.\ 160-161]{19LM}) the following partial converse of this theorem.
\begin{Thm}[L.\ Moln{\'a}r]
    If $\sigma $ is a symmetric Kubo-Ando mean with representing operator monotone function $f$ such that (a), (b), or (c) is true in Theorem \ref{thm:MolnarOpenProbRelThm} then there is a continuous strictly
    increasing and surjective function $g:(0,\infty )\rightarrow (0,\infty )$ with $g\not=f^{-1}$ such that the operation $\diamond :\left( A,B\right) \mapsto g\left( A\sigma
    B\right) $, $A,B\in B\left( H\right) ^{++}$ satisfies (\ref{Thm6AssocId}).
\end{Thm}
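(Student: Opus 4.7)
The plan is to reduce the operator identity to a scalar transference identity, then exhibit explicit choices of $g$ in each of the three cases. Since $\sigma$ is symmetric, taking $A=I$ in the Kubo-Ando representation $A\sigma B = A^{1/2} f(A^{-1/2} B A^{-1/2}) A^{1/2}$ yields $I\sigma X = X\sigma I = f(X)$, so writing $h := g\circ f$ one has $A\diamond I = h(A)$ and $I\diamond B = h(B)$. The weak-associativity law \eqref{Thm6AssocId} becomes $g(h(A)\,\sigma\, B) = g(A\,\sigma\, h(B))$, and since $g$ is injective this is equivalent to
\begin{equation*}
h(A)\,\sigma\, B \;=\; A\,\sigma\, h(B), \qquad \forall\, A,B \in B(H)^{++}.
\end{equation*}
Thus the task reduces to producing a scalar function $h$ with this transference property, after which $g := h\circ f^{-1}$ will be the desired function, provided I can check regularity and $g\neq f^{-1}$.

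For case (a), the scaling rule $f(c^2 x)=cf(x)$ (which also gives $f(c^{-2}x)=c^{-1}f(x)$ by substitution) suggests the ansatz $h(x) := c^{-2} x$. A direct substitution in the Kubo-Ando formula, using the operator version of the scaling rule (valid via the spectral calculus), produces both $h(A)\sigma B = c^{-1}(A\sigma B)$ and $A\sigma h(B) = c^{-1}(A\sigma B)$. Taking $g(y) := c^{-2} f^{-1}(y)$ then yields a continuous, strictly increasing map; surjectivity onto $(0,\infty)$ follows from the fact that the scaling rule forces $f:(0,\infty)\to(0,\infty)$ to be a bijection (else $f\equiv 0$, contradicting the mean property $f(1)=1$); finally $g\neq f^{-1}$ because $c\neq 1$. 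For cases (b) and (c) the constructions are immediate: with $\sigma = \nabla$ take $g(y)=2y$, so that $A\diamond B = A+B$ and both sides of \eqref{Thm6AssocId} equal $A+I+B$; with $\sigma = !$ take $g(y)=y/2$, so that $A\diamond B = (A^{-1}+B^{-1})^{-1}$ and both sides equal $(A^{-1}+I+B^{-1})^{-1}$. In each case $g$ is visibly distinct from $f^{-1}$ (which is $2y-1$ and $y/(2-y)$, respectively) and is a continuous, strictly increasing bijection of $(0,\infty)$ onto itself.

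The principal obstacle is the verification in case (a), which must be carried out at non-commuting $A,B$ and relies on the fact that the scalar identity $f(c^2 x)=cf(x)$ upgrades to the operator identity $f(c^2 X)=cf(X)$ via the functional calculus: the spectral resolution of the positive operator $X$ transports the scalar rule pointwise across its spectrum. Once this upgrade is in hand, the computation reduces to tracking the factors of $c$ emerging from $(c^{-2}A)^{\pm 1/2}$ and from the scaling rule applied to the transformed argument of $f$, and the symmetric computation on the right-hand side produces the same value. All remaining checks — regularity of $g$ and distinctness from $f^{-1}$ — are routine bookkeeping.
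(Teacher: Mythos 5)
Your proof is correct: the reduction of \eqref{Thm6AssocId} to the transference identity $h(A)\,\sigma\,B=A\,\sigma\,h(B)$ with $h=g\circ f$, the choice $g=c^{-2}f^{-1}$ in case (a) (using the functional-calculus upgrade of $f(c^{2}x)=cf(x)$ together with the fact that a nonconstant operator monotone $f$ satisfying this scaling law with $f(1)=1$ is a continuous, strictly increasing bijection of $(0,\infty)$, so $f^{-1}$ exists), and the explicit choices $g(y)=2y$ and $g(y)=y/2$ in cases (b) and (c) all check out, including the requirement $g\neq f^{-1}$. Note that the paper does not prove this theorem itself but quotes it from \cite{19LM}, and your construction is essentially the same as Moln{\'a}r's original argument, so there is nothing further to reconcile.
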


This motivates the following definition of a special class of positive operator means considered by L.\ Moln{\'a}r.
\begin{Def}[Moln{\'a}r mean]\label{def:MolnarMean}
    A symmetric Kubo-Ando mean $\sigma$ is called a Moln{\'a}r mean if its representing function $f$ has property (a) in Theorem \ref{thm:MolnarOpenProbRelThm}. The set of all Moln{\'a}r means will be called the Moln{\'a}r class of means.
\end{Def}

Notice that the geometric mean $\#$ is a Moln{\'a}r mean with representing function $f_{\#}$. Because of this, L.\ Moln{\'a}r \cite[p.\ 161]{19LM} posed the following problem, which we have rephrased in terms of Definition \ref{def:MolnarMean}.
\begin{Prob}[L.\ Moln{\'a}r]
    Is the geometric mean $\#$ the only Moln{\'a}r mean?
\end{Prob}

In \cite[p.~161]{19LM}, L.\ Moln{\'a}r says the following, in regard to the question above, which motivated our paper: ``However, we still do not know if the answer to the question is positive or negative. If it were affirmative, then we would get an interesting common characterization of the three fundamental operator means, the arithmetic, harmonic and geometric means." We are able to answer his question (in the negative) by proving the following theorem:
\begin{Thm}\label{Thm:SolnMolnarOpenProb}
    There are infinitely many Moln{\'a}r means.
\end{Thm}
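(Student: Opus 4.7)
The approach is to parameterize the representing function $f$ of any symmetric Kubo-Ando mean via the boundary ``phase function'' of $\log f$ on the negative real axis, translate the defining properties of the Moln\'ar class into explicit conditions on this phase, and then exhibit an infinite family of phases satisfying them. Since the representing function $f \in OM_+$ of any symmetric Kubo-Ando mean maps $(0,\infty)$ strictly into $(0,\infty)$, the function $\log f$ extends to a Herglotz function on $\mathbb{C}\setminus(-\infty,0]$ whose imaginary part lies in $[0,\pi]$. Its almost-everywhere boundary values on $(-\infty,0)$ define a measurable
\[
\phi(t)=\frac{1}{\pi}\operatorname{Im}\log f(-t+i0),\qquad \phi:(0,\infty)\to[0,1],
\]
and conversely, any measurable $\phi:(0,\infty)\to[0,1]$ is the phase of a unique normalized $f\in OM_+$ with $f(1)=1$ via the exponential-integral representation referenced in the Notation of Section~1 (c.f.\ \cite[Appendix]{77KN}).

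Next I would convert the remaining defining properties of a Moln\'ar mean into conditions on $\phi$. Since both sides are analytic on $\mathbb{C}\setminus(-\infty,0]$, the symmetry identity $f(x)=xf(1/x)$ extends off the positive real axis; evaluating $\operatorname{Im}\log$ at $z=-t+i0$ and using Schwarz reflection together with $\operatorname{Im}\log(-t+i0)=\pi$ yields $\phi(t)+\phi(1/t)=1$ for a.e.\ $t>0$. Similarly the Moln\'ar scaling $f(c^{2}x)=cf(x)$ translates to $\phi(c^{2}t)=\phi(t)$ for a.e.\ $t>0$. Substituting $\phi(t)=\tfrac12+\psi(\log t)$ with $T=2\log c$ rewrites these two conditions as: $\psi:\mathbb{R}\to[-1/2,1/2]$ is a real measurable \emph{odd, $T$-periodic} function. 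This is precisely the order-preserving correspondence announced in the abstract.

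To finish, fix any $c>0$ with $c\neq 1$, set $T=2\log c$, and for each positive integer $n$ define
\[
\psi_{n}(s)=\tfrac12\sin\!\left(\tfrac{2\pi n s}{T}\right).
\]
Each $\psi_{n}$ is odd, $T$-periodic, and bounded in absolute value by $1/2$, and the $\psi_{n}$ are pairwise distinct as measurable functions. Under the bijection above they produce pairwise distinct representing functions $f_{n}$, hence pairwise distinct Moln\'ar means $\sigma_{f_{n}}$ (with $\psi\equiv 0$ recovering the geometric mean $\#$). The main obstacle is the careful justification of step one: that the exponential-integral representation gives a well-defined bijection between measurable phases $\phi\in[0,1]$ and normalized strictly positive $f\in OM_+$, and that operator monotonicity is automatic for any such $\phi$. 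The key input is that $\log f$ is itself Herglotz with imaginary part in $[0,\pi]$, so its Herglotz-Nevanlinna measure on $(-\infty,0]$ is absolutely continuous with density $\phi$; combining this with Schwarz reflection and analytic continuation lets the symmetry and scaling identities pass rigorously from $(0,\infty)$ to functional equations on the entire cut plane, and thence to the identities on $\phi$.
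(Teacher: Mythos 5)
Your overall strategy is the one the paper itself uses: both arguments rest on the exponential-integral representation of $OM_+\setminus\{0\}$ from \cite{77KN}, both encode the function by the boundary density $\phi\in[0,1]$ of $\log f$ on the cut, shift by $1/2$ and pass to logarithmic variables to obtain an odd, $T=2\log c$-periodic function bounded by $1/2$, and both exhibit infinitely many means through the sine densities $\tfrac12\sin(2\pi ns/T)$ (these are literally the examples produced by Theorem~\ref{th:Fourier} and \eqref{SeqNonGeomMolnarMeans}). The only structural difference is that you bypass the strip transform $S(w)=\log f(e^w)-w/2$ and the class $\mathcal{W}_p$ and work directly with boundary values of $\log f$ on the negative axis, which is a legitimate shortcut.

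However, as written you prove only the easy direction and assert the direction the theorem actually needs. Taking boundary values of $f(x)=xf(1/x)$ and $f(c^2x)=cf(x)$ gives the \emph{necessity} of $\phi(t)+\phi(1/t)=1$ and $\phi(c^2t)=\phi(t)$; to conclude that your $f_n$ are Moln\'ar means you must show, conversely, that an odd $T$-periodic $\psi$ with $\|\psi\|_\infty\le 1/2$ produces an $f$ satisfying those functional equations, and ``this is precisely the correspondence announced in the abstract'' is not an argument. The symmetry direction does follow from uniqueness in the representation, since $z\mapsto zf(1/z)$ lies in $OM_+$, equals $1$ at $z=1$, and has phase $1-\phi(1/t)$. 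The scaling direction has a genuine catch: equality of phases determines an element of $OM_+$ only up to a positive multiplicative constant, and $z\mapsto f(c^2z)/c$ is not normalized at $1$, so $\phi(c^2t)=\phi(t)$ a.e.\ yields only $f(c^2z)=Kc\,f(z)$ for some $K>0$. (For instance $f(z)=z^{\alpha}$ with $\alpha\neq\tfrac12$ has constant, hence periodic, phase but satisfies the scaling law only with the constant $K=c^{2\alpha-1}\neq 1$.) You must pin $K=1$: e.g.\ apply the relation at $1/(c^2x)$ and use the already-established symmetry twice to get $Kc\,f(x)=f(c^2x)=\tfrac{c}{K}f(x)$, hence $K^2=1$; or compare values along $x=c^{2n}$. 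The paper avoids this issue altogether by verifying periodicity at the level of $S$ itself, reducing it to the vanishing of a Cauchy-type integral, $F(z)=\int_0^p\Psi(\lambda)\,d\lambda=0$, for any odd $p$-periodic $\Psi$. With that one step added (together with the a.e.\ recovery of $\phi$ from $f$, which you correctly invoke to get distinctness of the $f_n$), your proof is complete.
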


The main goal of our paper is to prove this and, furthermore, to completely characterize the class of Moln{\'a}r means in terms of their representing functions, which we do with Theorems~\ref{th:Wpintrep} and \ref{th:Fourier}. These theorems are stated and proved in Section~\ref{sec:integr-repr-sw}; our approach is illustrated graphically in Fig.\ \ref{fig:summary}. Finally, we delve deeper into the order structure of the the Moln\'ar class (see Theorem~\ref{thm:order}) and give several infinite families of fully explicit nontrivial Moln{\'a}r means in Section \ref{sec:conclusion}. We conclude the paper by a modification of Moln{\'a}r’s original characterization so that the geometric mean is the only one satisfying the requisite set of properties (see Theorem \ref{th:CharGeoMean}).

\begin{figure}[t]
    \centering
    \includegraphics[scale=0.5]{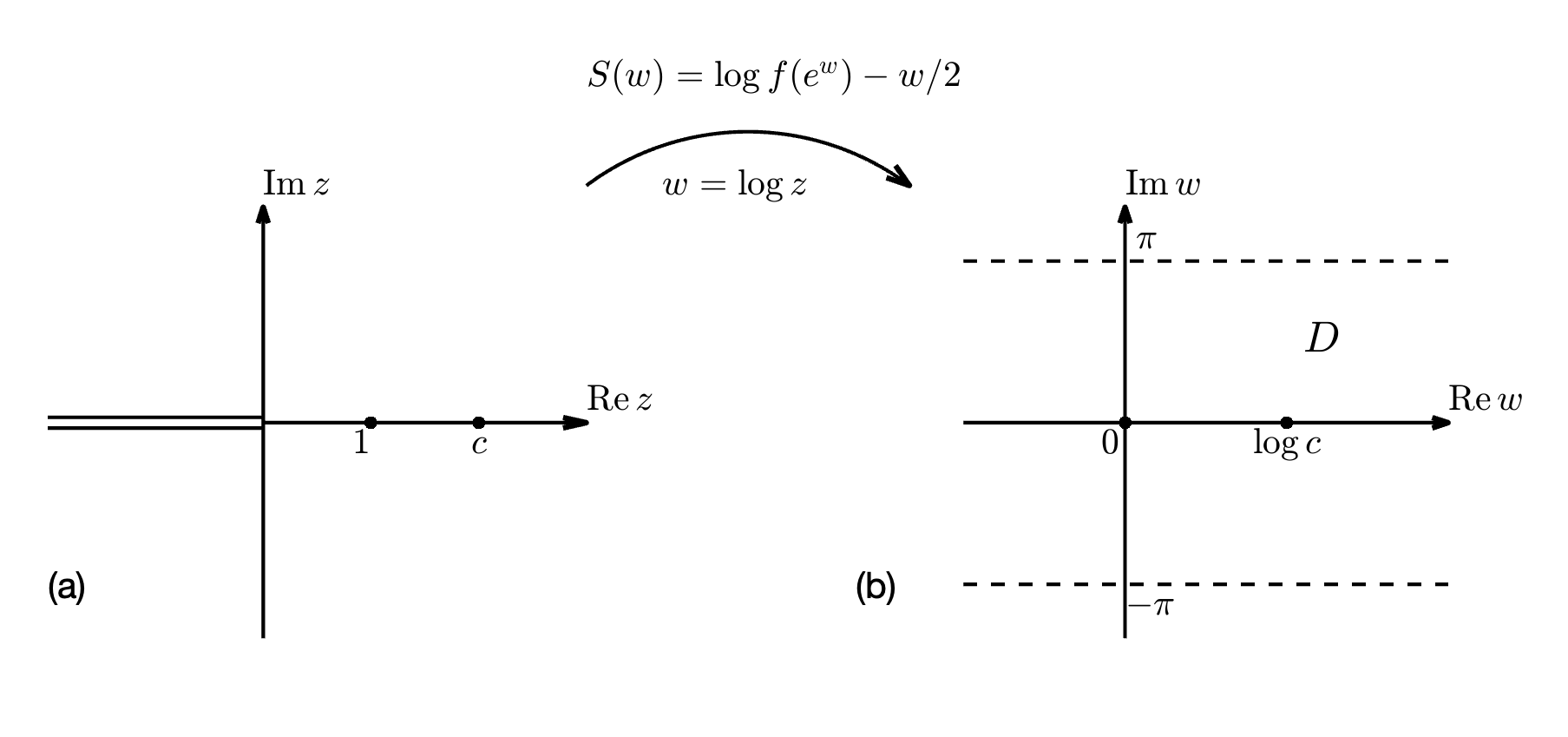}
    \caption{A Moln{\'a}r mean $\sigma=\sigma_f$ corresponds to a representing function $f\in \mathcal{M}_c$ for some $c\in (1,\infty)$. As represented in the transition from (a) to (b) in the figure, by using the invertible transform $w=\log z$ (with inverse $z=e^w$) from $\mathbb{C}\setminus (-\infty,0]$ onto the strip $D=\{w\in \mathbb{C}:|\operatorname{Im}w|<\pi\}=\mathbb{R}\times (-\pi,\pi)$, the analytic functions $f\in \mathcal{M}_c$ are mapped bijectively onto the analytic functions $S\in \mathcal{W}_p$ via $W_p(f)=\log f(e^{w})-w/2$, where $p=2\log c$. Then using the exponential representation of $OM_{+}\setminus\{0\}$ functions from \cite{77KN} we obtain the explicit representation of the class $\CW_{p}$ and, perforce, of $\CM_{c}$.}
    \label{fig:summary}
\end{figure}

\section{Characterization of the Moln{\'a}r class of means}\label{Sec2}
In this section we completely characterize the Moln{\'a}r class of means in terms of their representing functions. The next lemma is an important first step in this regard (whose proof is immediate from our discussion above) and motivates the definition that follows.  Also, as mentioned in the introduction, we will abuse notation and not distinguish between elements of $OM_+$ and positive operator monotone functions, unless necessary.

\begin{Lem}\label{lem:CharMolnarMeansRepresFunc}
    A connection $\sigma$ with representing function $f$ (i.e., $\sigma=\sigma_f$) is a Moln{\'a}r mean if and only if all of the following statements hold:
    \begin{itemize}
        \item[(i)] $f\in OM_+$;
        \item[(ii)] $f(1)=1$;
        \item[(iii)] $xf(1/x)=f(x),\text{ for }x>0$;
        \item[(iv)] there exists a positive scalar $c\not=1$ such that $f\left(c^{2}x\right) =cf\left( x\right) $, for $x>0$.
    \end{itemize}
\end{Lem}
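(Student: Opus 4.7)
The plan is to extract the four conditions by combining three facts already recalled in the introduction: the Kubo-Ando bijection between connections on $B(H)^{+}$ and operator monotone functions in $OM_{+}$; the two translation rules that characterize ``is a mean'' and ``is symmetric'' at the level of the representing function; and Definition~\ref{def:MolnarMean} together with condition (a) of Theorem~\ref{thm:MolnarOpenProbRelThm}. Since the lemma is a four-way ``if and only if,'' I would organize the argument as a chain of equivalences rather than two separate directions, so that each condition (i)-(iv) can be read off as the functional-analytic translate of one axiom in the definition of a Moln\'ar mean.

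Concretely, the first step is to observe that the very hypothesis ``$\sigma = \sigma_f$ with representing function $f$'' presupposes, via the Kubo-Ando bijection, that $f \in OM_{+}$; this already accounts for clause (i). Next, applying the scalar identity \eqref{formula:RepresFuncToConn}, $f(x)I = I\sigma(xI)$, to the normalization axiom (IV) yields $I\sigma I = I \Longleftrightarrow f(1)=1$, which is clause (ii). The permutation symmetry axiom (V) translates, via \cite[Corollary~4.2]{80KA} (as already quoted in the introduction), into the functional equation $f(x) = x f(1/x)$ for $x>0$, which is clause (iii). Together (i), (ii), (iii) are precisely what must be added to ``$\sigma$ is a connection'' in order to conclude that $\sigma$ is a symmetric Kubo-Ando mean, and conversely.

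Finally, Definition~\ref{def:MolnarMean} states that a symmetric Kubo-Ando mean is a Moln\'ar mean precisely when its representing function satisfies property (a) of Theorem~\ref{thm:MolnarOpenProbRelThm}, which is literally clause (iv). Concatenating these equivalences gives the lemma. I do not anticipate any genuine obstacle: the entire argument is a transcription of statements recorded in Section~1, and the only care required is to keep the directions of the four equivalences straight; accordingly I would expect the written proof to be a short paragraph acknowledging each axiom-to-function translation in turn.
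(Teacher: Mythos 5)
Your proposal is correct and matches the paper exactly: the paper gives no separate argument for this lemma, stating that its proof "is immediate from our discussion above," i.e., from the Kubo--Ando bijection, the translation of axioms (IV)--(V) into $f(1)=1$ and $f(x)=xf(1/x)$, and Definition~\ref{def:MolnarMean} with property (a). Your chain of equivalences is precisely that transcription, so nothing is missing.
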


\begin{Def}[Moln{\'a}r class of functions]\label{def:MolnarClassOfFunctions}
A function $f$ having properties (i)-(iv) in Lemma \ref{lem:CharMolnarMeansRepresFunc} will be called a Moln{\'a}r function. For a positive scalar $c\not=1$, we say $f$ is a Moln{\'a}r function of type $c$ if it is a Moln{\'a}r function and $f(c^2x)=cf(x),$ for $x>0$. The set of all Moln{\'a}r functions of type $c$ will be denoted by $\mathcal{M}_c$ and the Moln{\'a}r class of functions is $\mathcal{M}\equiv\cup_{c\in (0,\infty)\setminus\{1\}} \mathcal{M}_c$.
\end{Def}

As a consequence of the following lemma, to characterize the Moln{\'a}r means class $\mathcal{M}$, we need to consider only $c\in(1,\infty)$.
\begin{Lem}\label{lem:WhyWeCanAssumeCGeq1}
    For any positive scalar $c\not =1$,
    \begin{gather*}
        \mathcal{M}_c=\mathcal{M}_{1/c}.
    \end{gather*}
    In particular,
    \begin{gather*}
        \mathcal{M}=\cup_{c\in (1,\infty)} \mathcal{M}_c=\cup_{c\in (0,1)} \mathcal{M}_c.
    \end{gather*}
\end{Lem}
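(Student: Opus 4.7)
The plan is to observe that the defining functional equation for $\mathcal{M}_c$ is symmetric under the substitution $c \mapsto 1/c$. Conditions (i), (ii), (iii) of Lemma \ref{lem:CharMolnarMeansRepresFunc} make no reference to $c$, so only condition (iv) needs attention.

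Suppose $f \in \mathcal{M}_c$, so that $f(c^2 x) = c f(x)$ for all $x > 0$. Substituting $x \mapsto x/c^2$ (which is a valid substitution since $x/c^2$ ranges over all of $(0,\infty)$ as $x$ does) yields $f(x) = c f(x/c^2)$, and hence
\begin{gather*}
    f((1/c)^2 x) = f(x/c^2) = (1/c) f(x),\quad \text{for } x > 0.
\end{gather*}
This is precisely condition (iv) with $c$ replaced by $1/c$, so $f \in \mathcal{M}_{1/c}$. This shows $\mathcal{M}_c \subseteq \mathcal{M}_{1/c}$. Applying the same argument with $c$ replaced by $1/c$ (and noting that $1/c \neq 1$ whenever $c \neq 1$) gives the reverse inclusion $\mathcal{M}_{1/c} \subseteq \mathcal{M}_{(1/c)^{-1}} = \mathcal{M}_c$.

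For the second assertion, the set of admissible scalars decomposes as $\{c \in (0,\infty) : c \neq 1\} = (0,1) \cup (1,\infty)$, and the map $c \mapsto 1/c$ is a bijection between $(0,1)$ and $(1,\infty)$. Using the first part of the lemma,
\begin{gather*}
    \bigcup_{c \in (0,1)} \mathcal{M}_c = \bigcup_{c \in (0,1)} \mathcal{M}_{1/c} = \bigcup_{c' \in (1,\infty)} \mathcal{M}_{c'},
\end{gather*}
and therefore $\mathcal{M} = \bigcup_{c \in (1,\infty)} \mathcal{M}_c = \bigcup_{c \in (0,1)} \mathcal{M}_c$. There is no real obstacle here; this is a purely algebraic manipulation that essentially just records a symmetry of the functional equation $f(c^2 x) = c f(x)$, and its purpose is to justify restricting attention to $c \in (1,\infty)$ in the subsequent characterization via the transform $p = 2 \log c$ depicted in Figure \ref{fig:summary}.
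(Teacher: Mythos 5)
Your proof is correct and uses essentially the same argument as the paper: substitute $x \mapsto c^{-2}x$ in the functional equation $f(c^2x)=cf(x)$ to obtain $f((1/c)^2x)=(1/c)f(x)$, with the reverse inclusion and the union identities following by the symmetry $c \leftrightarrow 1/c$. Your write-up is in fact slightly more complete, since the paper leaves the reverse inclusion and the second assertion implicit.
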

\begin{proof}
    Let $c$ be a positive scalar $c\not =1$. If $f(c^2x)=cf(x)$ for all $x>0$ then $x>0$ implies $c^{-2}x>0$ and hence $f(x)=f(c^2(c^{-2}x))=cf(c^{-2}x)$ so that $f((c^{-1})^2x)=c^{-1}f(x)$ for all $x>0$.
\end{proof}

Next, we denote the principal branches of the logarithm and the square root by $\log(\cdot)$ and $\sqrt{\cdot}$, respectively, i.e.,
\begin{gather*}
    \log(z)=\log|z|+i\operatorname{Arg}(z),\;
    \operatorname{Arg}(z)\in (-\pi,\pi), \; z\in \mathbb{C}\setminus(-\infty,0],  \\
    \sqrt{z}=e^{\frac{1}{2}\log(z)},\;z\in \mathbb{C}\setminus(-\infty,0].
\end{gather*}

Now have some preliminary results.
\begin{Lem}\label{Lem:OperMonotFuncArgControl}
    Let $f\in OM_+\setminus\{0\}$. Then
    \begin{gather*}
        \overline {f(\overline{z})}=f(z)\text{ and } f(z)\not=0,\text{ if }z\in \mathbb{C}\setminus(-\infty,0].
    \end{gather*}
    In addition,
    \begin{gather}
        0\leq \operatorname{Arg}f(z)\leq \operatorname{Arg}z<\pi, \text{ if }\operatorname{Im}z>0.\label{AngleConstraintOMPlusFunctions}
    \end{gather}
\end{Lem}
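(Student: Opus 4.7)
The plan is to treat the three claims in order, using only the integral representation \eqref{OperMonoFuncIntegralRepr} of $f\in OM_{+}\setminus\{0\}$ together with the Schwarz reflection principle.

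First, I would establish $\overline{f(\overline{z})}=f(z)$ via Schwarz reflection: the domain $\mathbb{C}\setminus(-\infty,0]$ is symmetric about the real axis, $f$ is analytic on it, and the defining inequality $f(x)\geq 0$ for $x>0$ in particular implies $f$ is real on $(0,\infty)$. Hence $z\mapsto \overline{f(\overline{z})}$ is analytic on $\mathbb{C}\setminus(-\infty,0]$ and agrees with $f$ on the positive real axis, so the two functions coincide.

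Next I would prove $f(z)\neq 0$ on $\mathbb{C}\setminus(-\infty,0]$ using \eqref{OperMonoFuncIntegralRepr}. Assume $\operatorname{Im}z>0$ first. A short computation gives
\begin{gather*}
    \operatorname{Im}\frac{z(1+\lambda)}{z+\lambda}=\frac{\lambda(1+\lambda)\operatorname{Im}z}{|z+\lambda|^{2}},
\end{gather*}
so $\operatorname{Im}f(z)=b\operatorname{Im}z+\int_{(0,\infty)}\frac{\lambda(1+\lambda)\operatorname{Im}z}{|z+\lambda|^{2}}\,dm(\lambda)\geq 0$. If $f(z)=0$, then in particular $\operatorname{Im}f(z)=0$, forcing $b=0$ and $m|_{(0,\infty)}=0$; then $f\equiv a$ and $f(z)=0$ forces $a=0$, contradicting $f\neq 0$. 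If $z>0$ is real, the same representation directly gives $f(z)=0\Rightarrow a=0$, $b=0$, and $m|_{(0,\infty)}=0$, again a contradiction. Finally, if $\operatorname{Im}z<0$, Step 1 gives $f(z)=\overline{f(\overline{z})}$ with $\operatorname{Im}\overline{z}>0$, reducing to the case already handled.

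For the inequality \eqref{AngleConstraintOMPlusFunctions}, fix $z$ with $\operatorname{Im}z>0$, so $\operatorname{Arg}z\in(0,\pi)$. From the computation above, $\operatorname{Im}f(z)\geq 0$, and combined with $f(z)\neq 0$ and the observation that $\operatorname{Im}f(z)=0$ forces $f(z)\in(0,\infty)$, we get $\operatorname{Arg}f(z)\in[0,\pi)$, which already establishes the first and third inequalities. For the middle inequality, I would divide \eqref{OperMonoFuncIntegralRepr} by $z$ to get
\begin{gather*}
    \frac{f(z)}{z}=\frac{a}{z}+b+\int_{(0,\infty)}\frac{1+\lambda}{z+\lambda}\,dm(\lambda),
\end{gather*}
and note that each of $\operatorname{Im}(a/z)$ and $\operatorname{Im}\frac{1+\lambda}{z+\lambda}$ is $\leq 0$ for $\operatorname{Im}z>0$, so $\operatorname{Im}(f(z)/z)\leq 0$ and $f(z)/z\neq 0$. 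Since $\operatorname{Arg}f(z)-\operatorname{Arg}z\in(-\pi,\pi)$ lies in the range of the principal argument, we may identify $\operatorname{Arg}(f(z)/z)=\operatorname{Arg}f(z)-\operatorname{Arg}z$, and $\operatorname{Im}(f(z)/z)\leq 0$ together with $f(z)/z\neq 0$ gives $\operatorname{Arg}f(z)-\operatorname{Arg}z\leq 0$, i.e., $\operatorname{Arg}f(z)\leq\operatorname{Arg}z$.

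The only subtle point, and what I expect to be the main obstacle, is justifying the step $\operatorname{Arg}(f(z)/z)=\operatorname{Arg}f(z)-\operatorname{Arg}z$ without an unwanted $2\pi$ shift; this is why I first pin down that $\operatorname{Arg}f(z)\in[0,\pi)$ rather than only $[0,\pi]$, and excluded $\operatorname{Arg}f(z)=\pi$ by ruling out negative real values of $f(z)$ via the same imaginary-part argument used for nonvanishing.
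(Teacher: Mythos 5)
Your proof is correct, and for the key inequality it takes a genuinely different (more self-contained) route than the paper. The symmetry step $\overline{f(\overline{z})}=f(z)$ is the same in both: analytic continuation of the identity off $(0,\infty)$. For nonvanishing, the paper argues via the open mapping principle (either $f$ is a positive constant or $\operatorname{Im}f>0$ on $\mathbb{C}^+$), while you read it off the integral representation \eqref{OperMonoFuncIntegralRepr}; both work, and yours has the small advantage of simultaneously showing that $\operatorname{Im}f(z)=0$ with $\operatorname{Im}z>0$ forces $f$ to be a positive constant, which you then reuse to exclude $\operatorname{Arg}f(z)=\pi$. The real divergence is in proving $\operatorname{Arg}f(z)\leq\operatorname{Arg}z$: the paper invokes the known duality result that $f_{\perp}(z)=z/f(z)$ again lies in $OM_+\setminus\{0\}$ (citing Kubo--Ando and Hiai--Petz) and then transfers the inequality through $\log\circ f_{\perp}=\log-\log\circ f$ by analytic continuation from $(0,\infty)$; you instead divide \eqref{OperMonoFuncIntegralRepr} by $z$ and compute $\operatorname{Im}(f(z)/z)\leq 0$ directly, which is in effect a direct proof of the very fact about $f_\perp$ that the paper cites, and you then finish with principal-branch bookkeeping rather than with logarithms. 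Your flagged subtlety is handled adequately: since $\operatorname{Arg}f(z)\in[0,\pi)$ and $\operatorname{Arg}z\in(0,\pi)$, the difference lies in $(-\pi,\pi)$, so it equals $\operatorname{Arg}(f(z)/z)$ with no $2\pi$ shift, and being strictly less than $\pi$ it cannot be the argument of a negative real number, so $\operatorname{Im}(f(z)/z)\leq 0$ indeed yields a nonpositive difference. The trade-off: the paper's argument is shorter and emphasizes the structural duality $f\mapsto f_\perp$ (which recurs in the operator-mean literature), whereas yours is elementary and self-contained, using nothing beyond \eqref{OperMonoFuncIntegralRepr}, at the cost of the explicit argument-arithmetic at the end.
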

\begin{proof}
  Let $f\in OM_+\setminus\{0\}$. First, as $\overline {f(\overline{x})}=f(x)$ for all $x>0$, it follows by analyticity that $\overline {f(\overline{z})}=f(z)$ for all $z\in \mathbb{C}\setminus(-\infty,0]$. Next, it follows from the integral representation \eqref{OperMonoFuncIntegralRepr} of $f$ that $f(x)>0$ for all $x>0$. It also follows from the open mapping principle that, since $\Im(f(z))\ge 0$ for all $z\in\bb{C}^{+}$, then either $f(z)$ is a positive constant, or $\Im(f(z))>0$, for all $z\in\bb{C}^{+}$. Inequality (\ref{AngleConstraintOMPlusFunctions}) is proved by invoking a dual connection (cf.\ \cite[Corollary 4.3]{80KA} and \cite[p.\ 194]{10FH})
$f_{\perp}\in OM_+\setminus\{0\}$,  defined by:
    \begin{gather*}
        f_{\perp}(z)=\frac{z}{f(z)},\text{ for }z\in \mathbb{C}\setminus(-\infty,0].
    \end{gather*}
    As $f,f_{\perp}\in OM_+\setminus\{0\}$, then the compositions $\log\circ f$ and $\log\circ f_{\perp}$ are analytic functions on $\mathbb{C}\setminus(-\infty,0]$ satisfying $(\log\circ f_{\perp})(x)=\log(x)-\log f(x)$ for all $x>0$. By analyticity this implies $\log\circ f_{\perp}=\log-\log\circ f$ on $\mathbb{C}\setminus(-\infty,0]$ so that
    \begin{gather*}
        0\leq \operatorname{Im}[\log f_{\perp}(z)]=\operatorname{Im}(\log z)-\operatorname{Im}[\log f(z)]=\operatorname{Arg}z-\operatorname{Arg}f(z)
    \end{gather*}
    for every $z\in \mathbb{C}^{+}$, proving inequalities \eqref{AngleConstraintOMPlusFunctions}.
\end{proof}

\begin{Lem}\label{lem:DefAndPropOfSFunc}
    If $f\in OM_+\setminus\{0\}$ then the function $S:D\rightarrow \mathbb{C}$ defined by
\begin{gather}
    S(w)=\log f(e^w)-\frac{1}{2}w, \text{ for }w\in D,\label{DefSfInTermsOffPart1}\\
    D=\{w\in \mathbb{C}:-\pi <\operatorname{Im}w<\pi\},\label{DefSfInTermsOffPart2}
\end{gather}
is analytic on $D$ and satisfies
\begin{gather}
    |\operatorname{Im}S(w)|\leq\frac{1}{2}\operatorname{Im}w,\;\text{if }0\leq \operatorname{Im} w<\pi,\label{ImagPartSConstr}\\
     f(z)=\sqrt{z}e^{S(\log z)},\;\text{for } z\in \mathbb{C}\setminus(-\infty,0].\label{RecovFormulaForfFromS}
\end{gather}
Conversely, if $S:D\rightarrow \mathbb{C}$ is an analytic function satisfying (\ref{ImagPartSConstr}) then the function $f:\mathbb{C}\setminus(-\infty,0]\rightarrow \mathbb{C}$ defined by (\ref{RecovFormulaForfFromS}) is in $OM_+\setminus\{0\}$ and $S$ is given in terms of $f$ by (\ref{DefSfInTermsOffPart1}).
\end{Lem}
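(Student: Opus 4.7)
The plan is to establish the two directions by carefully tracking the principal branches of $\log$ and $\sqrt{\cdot}$ and exploiting the argument bound \eqref{AngleConstraintOMPlusFunctions}. For the forward direction, I would first observe that for $w\in D$ we have $e^w\in\mathbb{C}\setminus(-\infty,0]$, and by Lemma~\ref{Lem:OperMonotFuncArgControl} (combined with the reflection $\overline{f(\bar z)}=f(z)$ to handle the lower half-plane) $f(e^w)\neq 0$ with $\operatorname{Arg}f(e^w)\in(-\pi,\pi)$, so $\log f(e^w)$ is well-defined in the principal branch. Analyticity of $S$ is then immediate from composition. For the bound \eqref{ImagPartSConstr}, when $\operatorname{Im}w\in(0,\pi)$ I would write
\[\operatorname{Im}S(w)=\operatorname{Arg}f(e^w)-\tfrac{1}{2}\operatorname{Im}w\]
and apply $0\leq\operatorname{Arg}f(e^w)\leq\operatorname{Im}w$ from \eqref{AngleConstraintOMPlusFunctions}; the case $\operatorname{Im}w=0$ follows from positivity of $f$ on $(0,\infty)$. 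The representation \eqref{RecovFormulaForfFromS} then follows by exponentiating \eqref{DefSfInTermsOffPart1} and substituting $z=e^w$, using $\sqrt{z}=e^{w/2}$ for $w=\log z\in D$.

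For the converse, I would define $f$ by \eqref{RecovFormulaForfFromS} and verify the $OM_+\setminus\{0\}$ axioms. Analyticity on $\mathbb{C}\setminus(-\infty,0]$ is clear; the bound \eqref{ImagPartSConstr} at $\operatorname{Im}w=0$ forces $\operatorname{Im}S=0$ on the real axis, whence $f(x)>0$ for $x>0$. For $z$ with $\operatorname{Im}z>0$, setting $w=\log z\in D$ so that $\operatorname{Im}w\in(0,\pi)$, the identity $f(z)=e^{w/2+S(w)}$ has the imaginary part of its exponent equal to
\[\tfrac{1}{2}\operatorname{Im}w+\operatorname{Im}S(w)\in[0,\operatorname{Im}w]\subset[0,\pi)\]
by \eqref{ImagPartSConstr}, so $\operatorname{Im}f(z)\geq 0$. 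Since $f$ is manifestly nonvanishing, $f\in OM_+\setminus\{0\}$.

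The main subtlety lies in recovering \eqref{DefSfInTermsOffPart1} from $f$ without branch ambiguity, because the hypothesis \eqref{ImagPartSConstr} is stated only for $\operatorname{Im}w\geq 0$ while the identity to be proved involves all of $D$. I would address this by first upgrading the bound to $|\operatorname{Im}S(w)|\leq\tfrac{1}{2}|\operatorname{Im}w|$ on all of $D$: the identity $\overline{S(\bar w)}=S(w)$ holds by analytic continuation from the real axis, where $S$ is real-valued, and this symmetrization is immediate. The reflection transfers to $\overline{f(\bar z)}=f(z)$, and combined with $\operatorname{Im}f(z)\geq 0$ in the upper half-plane it gives $\operatorname{Arg}f(z)\in(-\pi,\pi)$ throughout $\mathbb{C}\setminus(-\infty,0]$, so $\log f$ is well-defined in the principal branch. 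Exponentiating the defining relation yields $f(e^w)=e^{w/2+S(w)}$, so that $\log f(e^w)-\tfrac{1}{2}w-S(w)$ is a continuous $2\pi i\mathbb{Z}$-valued function on the connected set $D$, hence constant; evaluation at $w=0$, where all three quantities are real, pins the constant to zero and yields \eqref{DefSfInTermsOffPart1}.
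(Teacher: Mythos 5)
Your proposal is correct and follows essentially the same route as the paper: the forward direction uses Lemma~\ref{Lem:OperMonotFuncArgControl} exactly as the paper does, and in the converse you verify membership in $OM_+\setminus\{0\}$ the same way and then remove the $2\pi i\mathbb{Z}$ branch ambiguity by a constancy argument, differing only in that you pin the constant at $w=0$ (after a Schwarz-reflection upgrade of \eqref{ImagPartSConstr} to the whole strip) while the paper fixes it by comparing imaginary parts at a point with $\operatorname{Im}z>0$. One small wording issue: $\operatorname{Im}f(z)\geq 0$ alone does not exclude negative real values of $f$ in the upper half-plane, but this is harmless since your exponent bound $\operatorname{Im}[S(\log z)+\tfrac{1}{2}\log z]\in[0,\pi)$ already gives $\operatorname{Arg}f(z)\in[0,\pi)$ there, which is what is actually needed for the principal logarithm.
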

\begin{proof}
    ($\Rightarrow$): Let $f\in OM_+\setminus\{0\}$. Then by Lemma \ref{Lem:OperMonotFuncArgControl}, $f:\mathbb{C}\setminus(-\infty,0]\rightarrow \mathbb{C}$ is an analytic function with range $f(\mathbb{C}\setminus(-\infty,0])\subseteq \mathbb{C}\setminus(-\infty,0]$ so that the function $z\mapsto \log f(z)$ is analytic on the domain $\mathbb{C}\setminus(-\infty,0]$.  Next, the function $w\mapsto e^{w}$ is analytic on the domain $D$ with range $e^{D}=\mathbb{C}\setminus(-\infty,0]$. It follows that the composition of functions $w\mapsto \log f(e^w)$ is well-defined and analytic on $D$. From this it follows immediately that the function $S:D\rightarrow \mathbb{C}$ defined by (\ref{DefSfInTermsOffPart1}), (\ref{DefSfInTermsOffPart2}) is analytic on $D$ and satisfies (\ref{RecovFormulaForfFromS}). Also, if $\operatorname{Im}w=0$ then $w\in D$ and since $f(x)\geq 0$ for $x>0$, then by (\ref{DefSfInTermsOffPart1}) we have $\operatorname{Im}S(w)=\operatorname{Arg}f(e^{\operatorname{Re}w})=0$. To complete the proof of (\ref{ImagPartSConstr}), we note that by the hypotheses on $f$ it follows from Lemma \ref{Lem:OperMonotFuncArgControl} that $0\leq \operatorname{Arg}f(z)\leq \operatorname{Arg}z<\pi$ if $\operatorname{Im}z>0$ and since
    \begin{gather*}
        \operatorname{Arg}e^w=\operatorname{Im}w,\;\operatorname{Im}e^w=e^{\operatorname{Re}w}\sin(\operatorname{Im}w)>0,\;\text{if }0< \operatorname{Im} w<\pi,
    \end{gather*}
    it follows that
    \begin{gather*}
    -\frac{1}{2}\operatorname{Im}w\leq \operatorname{Im}S(w)=\operatorname{Arg}f(e^w)-\frac{1}{2}\operatorname{Im}w\leq \frac{1}{2}\operatorname{Im}w,\;\text{if }0< \operatorname{Im} w<\pi,
    \end{gather*}
    which proves (\ref{ImagPartSConstr}).

    ($\Leftarrow$): Conversely, suppose $S:D\rightarrow \mathbb{C}$ is an analytic function [where $D$ is defined by (\ref{DefSfInTermsOffPart2})] satisfying (\ref{ImagPartSConstr}). Let $f:\mathbb{C}\setminus(-\infty,0]\rightarrow \mathbb{C}$ be the function defined by (\ref{RecovFormulaForfFromS}). As $\log z$ and $\sqrt{z}$ are analytic on the domain $\mathbb{C}\setminus(-\infty,0]$ with $\log z\in D$ for $z\in \mathbb{C}\setminus(-\infty,0]$ then it follows that $f:\mathbb{C}\setminus(-\infty,0]\rightarrow \mathbb{C}$ is analytic and cannot be the zero function since $f(z)=\sqrt{z}e^{S(\log z)}\not=0$ for all $z$ in the domain of $f$. Next, (\ref{ImagPartSConstr}) implies that $S(\log x)\in \mathbb{R}$ for $x>0$ so that $f(x)=\sqrt{x}e^{S(\log x)}>0$ for $x>0$. Also, (\ref{ImagPartSConstr}) implies that for $\operatorname{Im} z>0$ we have
    \begin{gather}
       0\leq \operatorname{Im}\left[S(\log z)+\frac{1}{2}\log z\right]\leq \operatorname{Im}\log z=\operatorname{Arg}z<\pi\label{KeyIneqImPartSFuncComposLog}
    \end{gather}
    and hence
    \begin{gather*}
        f(z)=\sqrt{z}e^{S(\log z)}=e^{S(\log z)+\frac{1}{2}\log z},\\
        \operatorname{Im}f(z)=e^{\operatorname{Re}[S(\log z)+\frac{1}{2}\log z]}\sin\left\{\operatorname{Im}\left[S(\log z)+\frac{1}{2}\log z\right]\right\}\geq 0.
    \end{gather*}
    Now to complete the proof, it remains only to prove that the function $S:D\rightarrow \mathbb{C}$ is given in terms of $f$ by the formula (\ref{DefSfInTermsOffPart1}). It follows by (\ref{RecovFormulaForfFromS}) that there exists a constant $m\in\mathbb{Z}$ such that
    \begin{gather}
        \log f(z)=S(\log z)+\frac{1}{2}\log z+i2\pi m, \text{ for } z\in \mathbb{C}\setminus(-\infty,0].\label{KeyEqSFuncComposLog}
    \end{gather}
    This implies that
    \begin{gather*}
        2\pi m+\operatorname{Im}\left[S(\log z)+\frac{1}{2}\log z\right]=\operatorname{Im} \log f(z)=\operatorname{Arg}f(z), \text{ for } z\in \mathbb{C}\setminus(-\infty,0].
    \end{gather*}
    As we know that if $\operatorname{Im} z>0$ then (\ref{KeyIneqImPartSFuncComposLog}) holds and $\operatorname{Arg} f(z)\in [0,\pi)$, this implies $m=0$. Hence, from (\ref{KeyEqSFuncComposLog}) it follows that
    \begin{gather*}
        \log f(e^w)=S(\log e^w)+\frac{1}{2}\log e^w=S(w)+\frac{1}{2}w, \text{ for } w\in D,
    \end{gather*}
    which proves equality (\ref{DefSfInTermsOffPart1}). This completes the proof.
\end{proof}

\begin{Lem}\label{lem:FundPropertiesRelatingMolnarFunctionsToSFunctions}
    Let $f\in  OM_+\setminus\{0\}$ and denote by $S:D\rightarrow \mathbb{C}$ the corresponding function defined by (\ref{DefSfInTermsOffPart1}). Then the following statements are true:
    \begin{itemize}
        \item[(i)] $f(1)=1$ if and only if $S(0)=0$.
        \item[(ii)] $f(x)=xf(1/x),$ for $x>0$ if and only if $S(-w)=S(w), \text{ for } w\in D$.
        \item[(iii)] If $c\in (1,\infty)$ then $f\left(c^{2}x\right) =cf\left( x\right) $, for $x>0$ if and only if $S(w+2\log c)=S(w),$ for $w\in D$ \textnormal{[}i.e., $S$ is periodic with a period $2\log c$\textnormal{]}, in which case
        \begin{gather*}
            \lim_{x\rightarrow 0^+}f(x)=0.
        \end{gather*}
    \end{itemize}
\end{Lem}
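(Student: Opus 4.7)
The plan is to prove all three parts by translating the multiplicative functional equations for $f$ on $(0,\infty)$ into additive functional equations for $S$ on $\mathbb{R}$ via the substitution $x = e^t$, and then promoting these identities from $\mathbb{R}$ to the full strip $D$ by analytic continuation. From the definition \eqref{DefSfInTermsOffPart1}, for any $t \in \mathbb{R}$ we have $S(t) = \log f(e^t) - t/2$, where $\log f(e^t)$ is real since $f(e^t) > 0$ by Lemma~\ref{Lem:OperMonotFuncArgControl}.

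For part (i), I would simply set $w = 0$ in \eqref{DefSfInTermsOffPart1} to obtain $S(0) = \log f(1)$, from which the equivalence $f(1) = 1 \iff S(0) = 0$ is immediate. For part (ii), writing $x = e^t$ with $t \in \mathbb{R}$, the identity $f(x) = x f(1/x)$ becomes $\log f(e^t) = t + \log f(e^{-t})$, which after substitution of $S$ simplifies to $S(t) = S(-t)$ for all $t \in \mathbb{R}$. Conversely, $S(w) = S(-w)$ on $D$ immediately gives this relation on the real axis and hence $f(x) = x f(1/x)$ for $x > 0$. The extension from $\mathbb{R}$ to $D$ uses the identity theorem: both $w \mapsto S(w)$ and $w \mapsto S(-w)$ are analytic on the connected domain $D$, and agreement on $\mathbb{R} \subset D$ (which has accumulation points in $D$) forces agreement on all of $D$.

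For part (iii), the same substitution $x = e^t$ turns $f(c^2 x) = c f(x)$ into $\log f(e^{t + 2\log c}) = \log c + \log f(e^t)$, and after substituting the definition of $S$ the $\log c$ and $t/2$ terms cancel, yielding $S(t + 2\log c) = S(t)$ for $t \in \mathbb{R}$. Analytic continuation via the identity theorem, exactly as in part (ii), extends this to all of $D$. For the limit statement, I would use that $f|_{(0,\infty)}$ is a positive operator monotone function and hence monotonically increasing on $(0,\infty)$, so that $L := \lim_{x \to 0^+} f(x) = \inf_{x > 0} f(x)$ exists in $[0, \infty)$. Iterating the relation $f(c^{-2} x) = c^{-1} f(x)$ (obtained as in Lemma~\ref{lem:WhyWeCanAssumeCGeq1}) gives $f(c^{-2n} x_0) = c^{-n} f(x_0)$ for any fixed $x_0 > 0$ and all $n \in \mathbb{N}$, and since $c^{-2n} x_0 \to 0^+$ while $c^{-n} f(x_0) \to 0$ (as $c > 1$), we conclude $L = 0$.

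None of the steps present a serious obstacle; the only subtlety is ensuring that the elementary real-variable identities for $S$ lift to the complex strip $D$, but this is handled uniformly by the identity theorem applied to the analytic function $S$ guaranteed by Lemma~\ref{lem:DefAndPropOfSFunc}. The monotonicity argument for the limit in (iii) is the only place where a property of $f$ beyond its analytic definition is invoked, and it follows directly from the fact that elements of $OM_+$ restrict to operator monotone (in particular, scalar monotone) functions on $(0,\infty)$.
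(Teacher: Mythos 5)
Your proof is correct, and for part (i), part (ii), and the equivalence in part (iii) it is essentially the paper's argument: verify the identity for $S$ on the real axis by direct substitution, then lift it to the strip $D$ via the identity theorem (the paper phrases the converse directions through the recovery formula $f(z)=\sqrt{z}\,e^{S(\log z)}$, which is the same computation). The one genuine divergence is the limit statement in (iii). The paper stays inside its $S$-function framework: since $S|_{\mathbb{R}}$ is continuous and periodic, it is bounded, so $x\mapsto e^{S(\log x)}$ is bounded on $(0,\infty)$ and $f(x)=\sqrt{x}\,e^{S(\log x)}\to 0$ as $x\to 0^+$. You instead invoke that $f$, being (operator) monotone, is nondecreasing on $(0,\infty)$, so the limit exists as $\inf_{x>0}f(x)$, and then iterate $f(c^{-2}x)=c^{-1}f(x)$ to force the limit to be $0$; this is valid (scalar monotonicity follows from the definition with a one-dimensional Hilbert space, or from the integral representation \eqref{OperMonoFuncIntegralRepr}), and it is essentially the alternative proof from Moln\'ar's paper that the authors cite parenthetically at the end of their own argument. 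The paper's route is marginally more self-contained, using only properties of $S$ already established; yours needs the extra (elementary) monotonicity input but avoids any appeal to periodicity-implies-boundedness.
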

\begin{proof}
    (i): If $f(1)=1$ then $S(0)=\log f(e^0)-\frac{1}{2}(0)=\log 1=0.$ Conversely, if $S(0)=0$ then $f(1)=\sqrt{1}e^{S(\log 1)}=e^{S(0)}=1$.

    (ii): Suppose $f(x)=xf(1/x)$ for all $x>0$. If $w\in \mathbb{R}$ then
\begin{gather*}
    S(-w)=\log f(e^{-w})-\frac{1}{2}(-w)=\log [e^{-w}f(e^w)]+\frac{1}{2}w\\
    =\log [f(e^w)]+\log[e^{-w}]+\frac{1}{2}w=\log [f(e^w)]-\frac{1}{2}w
    =S(w).
  \end{gather*}
 It follows that $S(w)=S(-w)$ for all $w\in D$, since functions $S(w)$ and $S(-w)$ are analytic in $D$ and agree on $\bb{R}\subset D$. Conversely, suppose that  $S(w)=S(-w)$ for all $w\in D$. Then, for every $x>0$,
\begin{gather*}
    xf(1/x)=x\sqrt{x^{-1}}e^{S(\log (x^{-1}))}=\sqrt{x}e^{S(-\log x)}=\sqrt{x}e^{S(\log x)}=f(x).
\end{gather*}

(iii): Let $c\in (1,\infty)$. Suppose $f\left(c^{2}x\right) =cf\left( x\right) $ for all $x>0$. If $w\in \mathbb{R}$ then
\begin{gather*}
    S(w+2\log c)=\log f(e^{w+2\log c})-\frac{1}{2}(w+2\log c)\\
    =\log f(c^2e^{w})-\log c-\frac{1}{2}w=\log [cf(e^{w})]-\log c-\frac{1}{2}w\\
    =\log c+\log f(e^{w})-\log c-\frac{1}{2}w
    =S(w).
  \end{gather*}
 It follows that $S(w+2\log c)=S(w)$ for all $w\in D$,  since functions $S(w)$ and $S(w+2\log c)$ are analytic in $D$ and agree on $\bb{R}\subset D$.
Conversely, suppose $S(w+2\log c)=S(w)$ for all $w\in D$. Then, for every $x>0$,
\begin{gather*}
    f(c^2x)=\sqrt{c^2x}e^{S(\log(c^2x))}=c\sqrt{x}e^{S(\log x+2\log c)}=c\sqrt{x}e^{S(\log x)}=cf(x).
\end{gather*}
Thus, in particular, if this is the case, then the restriction $S|_{\mathbb{R}}:\mathbb{R}\rightarrow \mathbb{R}$ is a continuous periodic function on $\mathbb{R}$, implying it is bounded, so that $x\mapsto e^{S(\log x)}$ is a bounded function on $(0,\infty)$ and therefore,
\begin{gather*}
            \lim_{x\rightarrow 0^+}f(x)=\lim_{x\rightarrow 0^+}\sqrt{x}e^{S(\log x)}=0.
\end{gather*}
(For alternative proof of the fact $\lim_{x\rightarrow 0^+}f(x)=0$, cf.\ \cite[pp.\ 160-161]{19LM}.) This completes the proof.
\end{proof}

These results motivate the following definition and proposition.
\begin{Def}[$\mathcal{W}$-functions]\label{def:WClassOfFunctions}
Let $D=\{w\in \mathbb{C}:-\pi <\operatorname{Im}w<\pi\}$. An analytic function $S:D\rightarrow \mathbb{C}$ having the properties:
\begin{itemize}
    \item[(i)] $|\operatorname{Im}S(w)|\leq\frac{1}{2}\operatorname{Im}w,\;\text{if }0\leq \operatorname{Im} w<\pi,$
    \item[(ii)] $S(0)=0$, 
    \item[(iii)] $S(-w)=S(w)$ for all $w\in D$,
    \item[(iv)] there exists a scalar $p>0$ such that $S(w+p)=S(w)$ for all $w\in D$,
\end{itemize}
will be called a $\mathcal{W}$-function with period $p$. The set of all $\mathcal{W}$-functions with period $p$ will be denoted by $\mathcal{W}_p$ and the class of all $\mathcal{W}$-functions is $\mathcal{W}=\cup_{p\in (0,\infty)} \mathcal{W}_p$.
\end{Def}

\begin{Pro}
    For each $f\in \mathcal{M}$, denote by $S_f$ the function $S$ defined by \eqref{DefSfInTermsOffPart1} and \eqref{DefSfInTermsOffPart2}.  Then the map
    \begin{gather*}
        W:\mathcal{M}\rightarrow \mathcal{W},\\
        W(f)=S_f,\; f\in \mathcal{M}
    \end{gather*}
    is a bijection from $\mathcal{M}$ onto $\mathcal{W}$ such that
    \begin{gather*}
        f\in \mathcal{M}_c\Leftrightarrow S_f\in \mathcal{W}_p,
    \end{gather*}
    where $c\in (1,\infty), p\in (0,\infty)$ are related by the formula:
    \begin{gather*}
        p=2\log c.
    \end{gather*}
    In particular, the restriction map $W_p$ defined by
    \begin{gather*}
        W_p=W|_{\mathcal{M}_{c}}:\mathcal{M}_c\rightarrow \mathcal{W}_p,\\
        W_p(f)=W(f)=S_f,\; f\in \mathcal{M}_c
    \end{gather*}
    is a bijection from $\mathcal{M}_c$ onto $\mathcal{W}_p$.
\end{Pro}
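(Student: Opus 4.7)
This proposition is essentially a bookkeeping result that assembles Lemmas~\ref{lem:DefAndPropOfSFunc}, \ref{lem:FundPropertiesRelatingMolnarFunctionsToSFunctions}, and \ref{lem:WhyWeCanAssumeCGeq1}: all the substantive analytic content has already been done. My plan is to (1) verify that $W$ maps $\mathcal{M}$ into $\mathcal{W}$ and, more precisely, sends $\mathcal{M}_c$ into $\mathcal{W}_{2\log c}$; (2) produce an explicit two-sided inverse via the reconstruction formula \eqref{RecovFormulaForfFromS}; and (3) read off bijectivity and the biconditional $f\in\mathcal{M}_c\Leftrightarrow S_f\in\mathcal{W}_p$ from what has been collected.

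For step (1), I take $f\in\mathcal{M}$. By Lemma~\ref{lem:WhyWeCanAssumeCGeq1} I may assume $f\in\mathcal{M}_c$ for some $c\in(1,\infty)$ and set $p=2\log c\in(0,\infty)$. Properties~(i) and~(ii) of Definition~\ref{def:MolnarClassOfFunctions} ensure $f\in OM_+\setminus\{0\}$, so the forward direction of Lemma~\ref{lem:DefAndPropOfSFunc} applies and gives that $S_f$ is analytic on $D$ and satisfies property~(i) of Definition~\ref{def:WClassOfFunctions}. The remaining three properties of $\mathcal{W}_p$ come one-by-one from Lemma~\ref{lem:FundPropertiesRelatingMolnarFunctionsToSFunctions}: part~(i) gives $S_f(0)=0$; part~(ii) gives $S_f(-w)=S_f(w)$; part~(iii) gives $S_f(w+p)=S_f(w)$. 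Hence $S_f\in\mathcal{W}_p\subseteq\mathcal{W}$, which simultaneously establishes well-definedness of $W$ and the $\Rightarrow$ direction of the biconditional.

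For step (2), given $S\in\mathcal{W}_p$ I define $f(z)=\sqrt{z}\,e^{S(\log z)}$ as in \eqref{RecovFormulaForfFromS}. The converse half of Lemma~\ref{lem:DefAndPropOfSFunc} gives $f\in OM_+\setminus\{0\}$ together with $S=S_f$, and the converse halves of the three parts of Lemma~\ref{lem:FundPropertiesRelatingMolnarFunctionsToSFunctions} translate properties~(ii)--(iv) of $S$ into $f(1)=1$, $f(x)=xf(1/x)$, and $f(c^2x)=cf(x)$ with $c=e^{p/2}\in(1,\infty)$. Thus $f\in\mathcal{M}_c\subseteq\mathcal{M}$, yielding the $\Leftarrow$ direction of the biconditional as well. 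The mutual inversion of $W$ with the map $S\mapsto f$ is then built directly into Lemma~\ref{lem:DefAndPropOfSFunc}: formulas \eqref{DefSfInTermsOffPart1} and \eqref{RecovFormulaForfFromS} are explicit two-sided inverses on $OM_+\setminus\{0\}$, so $W:\mathcal{M}\to\mathcal{W}$ is a bijection, and the restriction claim for $W_p:\mathcal{M}_c\to\mathcal{W}_p$ is just the biconditional restated. The hardest part is essentially nonexistent; the only subtlety is the convention $c>1$ (equivalently $p>0$), for which Lemma~\ref{lem:WhyWeCanAssumeCGeq1} was stated in advance. Note that neither $c$ nor $p$ is asserted to be a \emph{minimal} period, so a single $f$ (resp.\ $S$) may belong to $\mathcal{M}_c$ (resp.\ $\mathcal{W}_p$) for several values of $c$ (resp.\ $p$); this is harmless because the equivalence is claimed only for matched pairs $p=2\log c$.
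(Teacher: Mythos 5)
Your proposal is correct and follows exactly the route the paper intends: the paper's own proof simply states that the proposition "follows immediately" from Definitions~\ref{def:MolnarClassOfFunctions} and~\ref{def:WClassOfFunctions} and Lemmas~\ref{lem:CharMolnarMeansRepresFunc}, \ref{lem:DefAndPropOfSFunc}, and~\ref{lem:FundPropertiesRelatingMolnarFunctionsToSFunctions}, and your write-up just spells out that assembly (forward directions for well-definedness, the converse halves plus \eqref{RecovFormulaForfFromS} for the two-sided inverse, and Lemma~\ref{lem:WhyWeCanAssumeCGeq1} for the $c>1$ normalization). No gaps; your remark about non-minimal periods is a harmless and accurate clarification.
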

\begin{proof}
    The proof of this proposition follows immediately from Definitions \ref{def:MolnarClassOfFunctions} and \ref{def:WClassOfFunctions} and Lemmas \ref{lem:CharMolnarMeansRepresFunc}, \ref{lem:DefAndPropOfSFunc}, and \ref{lem:FundPropertiesRelatingMolnarFunctionsToSFunctions}.
\end{proof}

\section{Explicit characterization of $\CW_{p}$}
\label{sec:integr-repr-sw}
The goal of this section is to characterize the class of all $\mathcal{W}_{p}$-functions explicitly. Before we do this, we will need the following well-known collection of results (see, for instance, \cite{56AD, 64AD, 00GT, 05DK, 14GT}) on the relationship between Herglotz functions and their boundary values.
\begin{Thm}[Herglotz integral representation]\label{th:HerglotzIntReprWithImplicitSokhotskiPlemeljFormulas}
    If $a\in \mathbb{R}, b\geq 0,$ and $\mu$ is a positive Borel measure on $\mathbb{R}$ such that $\int_{\mathbb{R}}(1+\lambda^2)^{-1}\mu(\lambda)<\infty$ then
    \begin{gather*}
        h(z)=a+bz+\int_{\mathbb{R}}\left(\frac{1}{\lambda-z}-\frac{\lambda}{1+\lambda^2}\right)d\mu(\lambda),\quad z\in \mathbb{C}^+.
    \end{gather*}
    is a Herglotz function. The map $(a,b,\mu)\mapsto h$ defines a bijection between the class of all such triples and the set of all Herglotz functions. In particular, the triple $(a,b,\mu)$ can be recovered from the Herglotz function $h$ by the formulas
    \begin{gather*}
        a=\operatorname{Re}h(i),\quad b=\lim_{\eta\uparrow \infty}\frac{h(i\eta)}{i\eta},\\
        \frac{1}{2}\mu(\{\lambda_1\})+\frac{1}{2}\mu(\{\lambda_2\})+\mu((\lambda_1,\lambda_2))=\frac{1}{\pi}\lim_{\varepsilon\downarrow 0}\int_{\lambda_1}^{\lambda_2}\operatorname{Im}h(\lambda+i\varepsilon)d\lambda,\quad (\lambda_1,\lambda_2)\subseteq \mathbb{R}.
    \end{gather*}
    Moreover, $h(z)$ has finite normal limits $\lim_{\varepsilon\downarrow 0}h(\lambda+i\varepsilon)$ for Lebesgue a.e.\ $\lambda\in \mathbb{R}$ and the absolutely continuous (ac) part $\mu_{ac}$ of the measure $\mu$ with respect to the Lebesgue measure on $\mathbb{R}$ has density function (i.e., Radon-Nikodym derivative)
    \begin{gather*}
        \frac{d\mu_{ac}(\lambda)}{d\lambda}=\frac{1}{\pi}\lim_{\varepsilon\downarrow 0}\operatorname{Im}h(\lambda+i\varepsilon),\;\;\text{for Lebesgue a.e.\ }\lambda\in \mathbb{R}.
    \end{gather*}
    Furthermore, this latter statement is also true if we replace the normal limit to $\lambda$ by the limit to $\lambda$ in any (non-tangential) circular sector in the upper-half plane.
\end{Thm}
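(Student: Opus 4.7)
The plan is to prove Theorem~\ref{th:HerglotzIntReprWithImplicitSokhotskiPlemeljFormulas} in three stages: the forward direction together with the recovery of $a$ and $b$, the Stieltjes--Perron inversion and boundary statements, and lastly the existence of a representation for an arbitrary Herglotz function (the main obstacle). For the forward direction, fix an admissible triple $(a,b,\mu)$ and set $K(\lambda,z) := (\lambda-z)^{-1} - \lambda/(1+\lambda^2)$. A direct estimate gives $|K(\lambda,z)| \le C(z)/(1+\lambda^2)$ uniformly on compact subsets of $\mathbb{C}^+$, so the hypothesis $\int(1+\lambda^2)^{-1}d\mu(\lambda) < \infty$ ensures absolute and locally uniform convergence of the integral; Morera's theorem (or differentiation under the integral) then gives analyticity, while the identity $\operatorname{Im}K(\lambda,z) = \operatorname{Im}z/|\lambda-z|^2 \ge 0$ shows $\operatorname{Im}h(z) \ge 0$. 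The subtraction $-\lambda/(1+\lambda^2)$ was engineered precisely so that $\operatorname{Re}K(\lambda,i)=0$, yielding $\operatorname{Re}h(i)=a$ at once, while dominated convergence applied to $h(i\eta)/(i\eta)$ (using a $\mu$-integrable dominating bound of order $(1+\lambda^2)^{-1}$ uniformly in $\eta\ge 1$) yields $\lim_{\eta\uparrow\infty} h(i\eta)/(i\eta) = b$.

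For the Stieltjes--Perron inversion, observe that $\operatorname{Im}h(\lambda+i\varepsilon) = b\varepsilon + \int \varepsilon/((\lambda-t)^2+\varepsilon^2)\,d\mu(t)$. Dividing by $\pi$, integrating in $\lambda$ over $(\lambda_1,\lambda_2)$, and applying Fubini gives
\[
\frac{1}{\pi}\int_{\lambda_1}^{\lambda_2}\operatorname{Im}h(\lambda+i\varepsilon)\,d\lambda = \frac{b\varepsilon(\lambda_2-\lambda_1)}{\pi} + \int g_\varepsilon(t)\,d\mu(t),
\]
where $g_\varepsilon(t)$ is an elementary arctangent expression converging pointwise as $\varepsilon\downarrow 0$ to $\chi_{(\lambda_1,\lambda_2)}(t) + \tfrac12\chi_{\{\lambda_1,\lambda_2\}}(t)$; dominated convergence then produces the stated formula. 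The density formula $d\mu_{ac}/d\lambda = \tfrac{1}{\pi}\lim_{\varepsilon\downarrow 0}\operatorname{Im}h(\lambda+i\varepsilon)$ a.e., together with a.e.\ existence of non-tangential boundary limits, follow from the classical convergence theory for Poisson integrals of finite Borel measures (Lebesgue differentiation together with Fatou's theorem, the latter transferred from the disk via the Cayley transform described below).

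The main obstacle is surjectivity: constructing an admissible triple from an arbitrary Herglotz function $h$. The cleanest route is via the Cayley transform $\phi(z)=(z-i)/(z+i)$, a conformal map $\mathbb{C}^+ \to \mathbb{D}$, under which $\tilde h(w) := h(\phi^{-1}(w))$ is analytic on $\mathbb{D}$ with $\operatorname{Im}\tilde h \ge 0$. The disk version of the Herglotz representation --- itself proved via Harnack's inequality and weak-$*$ compactness of the family of positive measures $\tfrac{1}{\pi}\operatorname{Im}\tilde h(re^{i\theta})\,d\theta$ as $r\uparrow 1$ --- provides a real constant $\alpha$ and a finite positive Borel measure $\nu$ on $\partial\mathbb{D}$ such that
\[
\tilde h(w) = \alpha + i\int_{\partial\mathbb{D}} \frac{e^{i\theta}+w}{e^{i\theta}-w}\,d\nu(\theta).
\]
Pulling back via $\phi$, noting that $\phi^{-1}:\partial\mathbb{D}\setminus\{1\}\to\mathbb{R}$ is the bijection $e^{i\theta}\mapsto -\cot(\theta/2)$ sending $1$ to $\infty$, and performing the elementary kernel algebra that rewrites $i(e^{i\theta}+\phi(z))/(e^{i\theta}-\phi(z))$ in the normalized form $(\lambda-z)^{-1}-\lambda/(1+\lambda^2)$, the measure $\nu|_{\partial\mathbb{D}\setminus\{1\}}$ pushes forward (with the weight $1+\lambda^2$) to the desired $\mu$ on $\mathbb{R}$, which automatically satisfies $\int(1+\lambda^2)^{-1}d\mu<\infty$ since $\nu$ is finite. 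The atom at $1\in\partial\mathbb{D}$ produces the linear term $bz$ (with $b$ proportional to $\nu(\{1\})$), and the remaining real constants consolidate into $a$. I expect the most delicate bookkeeping to lie in isolating the atom at $1$ correctly and in matching the subtraction $\lambda/(1+\lambda^2)$, since a careless pullback would either erase the $bz$ contribution or violate the convergence of the integral.
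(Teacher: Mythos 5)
The paper itself offers no proof of this theorem: it is stated as a ``well-known collection of results'' and delegated to the cited literature, so there is no in-paper argument to compare against. Your outline reproduces the standard classical proof found in those references and is essentially sound: the kernel estimate $|K(\lambda,z)|\leq C(z)/(1+\lambda^{2})$ locally uniformly together with $\operatorname{Im}K(\lambda,z)=\operatorname{Im}z/|\lambda-z|^{2}\geq 0$ gives the forward direction; $\operatorname{Re}K(\lambda,i)=0$ and your dominated-convergence computation recover $a$ and $b$; the Fubini/arctangent computation gives the Stieltjes--Perron inversion, which also yields injectivity of $(a,b,\mu)\mapsto h$ since the stated interval formula determines $\mu$; and the Cayley transform plus the disk Herglotz representation gives surjectivity, the atom of $\nu$ at $1$ producing the linear term (in fact $b=\nu(\{1\})$ exactly, and $a=\alpha$). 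Two points to tighten if you write this out in full. First, in the inversion step $\mu$ need not be finite (only $\int(1+\lambda^{2})^{-1}d\mu<\infty$ is assumed), so dominated convergence needs a $\mu$-integrable dominant; the uniform bound $0\leq g_{\varepsilon}(t)\leq C/(1+t^{2})$ for $0<\varepsilon\leq 1$ does the job --- you flagged the analogous issue for the $b$-limit but not here. Second, the a.e.\ existence of finite normal and non-tangential limits of $h$ itself does not follow from Poisson-integral convergence of $\operatorname{Im}h$ alone; the standard argument applies Fatou's theorem for bounded analytic functions to a M\"obius transform of the \emph{range}, e.g.\ $(h-i)/(h+i)$, and rules out the boundary value $1$ on a set of positive measure, after which the density formula for $\mu_{ac}$ follows from the differentiation theory of measures applied to the Poisson integral of $\mu$. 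With those refinements your sketch is a complete and correct proof of the quoted theorem.
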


Our next result is an integral representation that shows that the class $\CW_{p}$ can be parametrized by all measurable, real-valued, odd, periodic functions bounded by 1/2 in absolute value.
\begin{Thm}
  \label{th:Wpintrep}
  If $\Psi\in L^{\infty}(\bb{R})$ is a real-valued, odd, $p$-periodic function with $\|\Psi\|_{\infty}\le 1/2$ then
  \begin{equation}
  \label{SGint}
    S(w)=\frac{1}{2}\int_{\bb{R}}\frac{\Psi(\Gl)\sinh\left(\frac{w}{2}\right) d\Gl}{\cosh\left(\frac{\Gl}{2}\right)\cosh\left(\frac{w-\Gl}{2}\right)},\;\;w\in D 
  \end{equation}
  is a function belonging to $\CW_p$. The map $\Psi \mapsto S$ defines a bijection between the class of all such functions in $L^{\infty}(\bb{R})$ (with equality in the sense of the $||\cdot||_{\infty}$ norm) and the set $\CW_p$. In particular, if $S\in \CW_p$ then $\Psi$ can be uniquely recovered by the formula:
  \begin{equation}
    \label{S2G}
    \Psi(\Gl)=\nth{\pi}\lim_{\mu\to\pi^{-}}\operatorname{Im}S(\Gl+i\mu),\;\;\text{for Lebesgue a.e.\ }\lambda\in \mathbb{R}.
  \end{equation}
\end{Thm}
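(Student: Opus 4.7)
The strategy I would follow combines the bijection $W:\CM\to\CW$ established in the preceding proposition with the exponential-integral representation of $OM_{+}\setminus\{0\}$ alluded to in the footnote of the Notation in Section 1 (cf.\ \cite[Appendix, Sec.\ 5]{77KN}). That representation asserts that every $f\in OM_{+}\setminus\{0\}$ can be written uniquely as
\[
f(z)=c_0\exp\!\left(\int_0^{\infty}\!\left(\frac{1}{t+1}-\frac{1}{t+z}\right)h(t)\,dt\right),\qquad z\in\bb{C}\setminus(-\infty,0],
\]
for some $c_0>0$ and a measurable $h:(0,\infty)\to[0,1]$, with boundary-value recovery $h(t)=\pi^{-1}\operatorname{Im}\log f(-t+i0^{+})$ for a.e.\ $t>0$ (by Stieltjes inversion applied to the Herglotz function $\log f$, cf.\ Theorem~\ref{th:HerglotzIntReprWithImplicitSokhotskiPlemeljFormulas}). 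The plan is to transport this representation, through the identification $f\leftrightarrow S=W(f)$ of Lemma~\ref{lem:DefAndPropOfSFunc}, onto the strip $D$ via the change of variables $t=e^{\lambda}$.

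First I would set $\Psi(\lambda):=h(e^{\lambda})-\tfrac{1}{2}$, so that $\Psi\in L^{\infty}(\bb{R})$ is real-valued with $\|\Psi\|_{\infty}\le\tfrac{1}{2}$. Using the identity $\tfrac{1}{2}\log z=\tfrac{1}{2}\int_0^{\infty}\bigl(\tfrac{1}{t+1}-\tfrac{1}{t+z}\bigr)\,dt$ to absorb the $-w/2$ term appearing in $S(w)=\log f(e^w)-w/2$, and then substituting $t=e^{\lambda}$ together with the elementary identity
\[
\frac{e^{\lambda}}{e^{\lambda}+1}-\frac{e^{\lambda}}{e^{\lambda}+e^w}
=\frac{1}{2}\bigl[\tanh(\lambda/2)+\tanh((w-\lambda)/2)\bigr]
=\frac{\sinh(w/2)}{2\cosh(\lambda/2)\cosh((w-\lambda)/2)},
\]
converts the exponential-integral formula directly into (\ref{SGint}); the normalization $S(0)=0$, equivalent to $f(1)=1$ by Lemma~\ref{lem:FundPropertiesRelatingMolnarFunctionsToSFunctions}, forces $c_0=1$.

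Next I would translate the remaining properties of $S\in\CW_p$ into properties of $\Psi$, using uniqueness of the representation. The substitution $t\mapsto 1/t$ in the exponent shows that $xf(1/x)=f(x)$ (equivalently $S(-w)=S(w)$) holds iff $h(t)+h(1/t)=1$ a.e., i.e., iff $\Psi$ is odd; the substitution $t\mapsto c^{2}t$ (with $p=2\log c$) shows that $f(c^{2}x)=cf(x)$ (equivalently $S(w+p)=S(w)$) holds iff $h(c^{2}t)=h(t)$ a.e., i.e., iff $\Psi$ is $p$-periodic. Formula (\ref{S2G}) then follows from the Stieltjes inversion cited above: as $\mu\to\pi^{-}$ and $w=\lambda+i\mu$, the point $e^{w}$ approaches $-e^{\lambda}$ from the upper half-plane, giving
\[
\operatorname{Im}S(\lambda+i\mu)=\operatorname{Im}\log f(e^{\lambda+i\mu})-\frac{\mu}{2}\ \longrightarrow\ \pi h(e^{\lambda})-\frac{\pi}{2}=\pi\Psi(\lambda)
\]
for a.e.\ $\lambda\in\bb{R}$, which also yields injectivity of $\Psi\mapsto S$ and hence the full bijectivity claim.

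The main obstacle I anticipate is twofold. Invoking the Krein--Nudelman exponential representation---crucially the surjectivity onto $\CW_p$ via the range $[0,1]$ of $h$---depends on an external result from \cite{77KN}; I would state it as a named lemma before the proof and verify the range condition using Lemma~\ref{Lem:OperMonotFuncArgControl}. The more delicate technical calculation is the direct verification of condition (iv) in Definition~\ref{def:WClassOfFunctions} from (\ref{SGint}): a naive shift $\lambda\mapsto\lambda-p$ inside the integrand leaves an unmatched $\cosh((\mu+p)/2)$ factor, and splitting via the $\tanh$-identity produces pieces that are not individually absolutely integrable. The resolution is to combine the difference into a single absolutely convergent integral, namely $S(w+p)-S(w)=\tfrac{\sinh(p/2)}{2}\int_{\bb{R}}\Psi(\mu)/[\cosh((\mu+p)/2)\cosh(\mu/2)]\,d\mu$, and then to show its vanishing by the twin substitutions $\mu\mapsto-\mu$ followed by $\mu\mapsto\mu-p$, simultaneously exploiting the oddness and $p$-periodicity of $\Psi$.
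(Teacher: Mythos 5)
Your proposal is correct in substance and rests on the same two pillars as the paper's own proof, namely the exponential representation of $OM_{+}\setminus\{0\}$ from \cite{77KN} and Stieltjes inversion (Theorem~\ref{th:HerglotzIntReprWithImplicitSokhotskiPlemeljFormulas}); the transcription of the kernel via $t=e^{\Gl}$, $z=e^{w}$ and the recovery formula \eqref{S2G} via the non-tangential limit $e^{\Gl+i\mu}\to-e^{\Gl}$ are exactly as in the paper. Where you genuinely diverge is in the treatment of the symmetry/periodicity conditions. The paper reduces the periodicity identity to a sum of two finite-interval Cauchy-type integrals $F_{1}+F_{2}$, extracts the piecewise functional equations \eqref{per} by inversion, and proves the converse by showing the combined integral collapses to $\int_{0}^{p}\Psi(\Gl)\,d\Gl=0$. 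You instead verify condition (iv) of Definition~\ref{def:WClassOfFunctions} directly from \eqref{SGint}: your identity $S(w+p)-S(w)=\tfrac{\sinh(p/2)}{2}\int_{\bb{R}}\Psi(\Gl)\bigl[\cosh\bigl(\tfrac{\Gl+p}{2}\bigr)\cosh\bigl(\tfrac{\Gl}{2}\bigr)\bigr]^{-1}d\Gl$ is correct (the $w$-dependence cancels), and the vanishing of this constant by the substitutions $\Gl\mapsto-\Gl$, $\Gl\mapsto\Gl+p$ uses oddness and periodicity jointly, which is a cleaner route to the forward direction; for the converse you read off the oddness and multiplicative invariance of $h$ from the boundary values of $\log f$, a leaner alternative to the paper's $F_{1},F_{2}$ analysis. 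One caution: your intermediate assertion that $S(w+p)=S(w)$ holds \emph{iff} $h(c^{2}t)=h(t)$ a.e.\ is false in the ``if'' direction, since multiplicative invariance of $h$ alone only yields $f(c^{2}z)=\kappa f(z)$ for some constant $\kappa>0$, not necessarily $\kappa=c$ (e.g.\ $\Psi\equiv\epsilon\neq0$ constant gives $S(w)=\epsilon w$, which is not periodic); the missing normalization is precisely $\int_{0}^{p}\Psi(\Gl)\,d\Gl=0$, automatic once $\Psi$ is also odd. Because you only invoke the valid ``only if'' implication in the recovery direction, and your direct verification of (iv) uses both oddness and periodicity, the argument goes through, but in the final write-up the equivalence should be stated for the pair of conditions jointly (or as one-sided implications).
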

\begin{proof}
We start with the exponential representation of a function $f\in OM_{+}\setminus\{0\}$ from \cite[Appendix, Sec.\ 5]{77KN} (see also \cite{56AD, 00GT}): There exists scalar $C$ and function $\phi\in L^{\infty}((0,\infty))$ such that for every $z\in \mathbb{C}\setminus (-\infty,0]$,
\begin{equation}
  \label{mult}
      f(z)=Ce^{g(z)},\quad C>0, \quad
    g(z)=\int_{0}^{\infty}\left(\frac{t}{1+t^{2}}-\frac{1}{t+z}\right)\phi(t)dt,\quad 0\le \phi(t)\le 1. 
  \end{equation}
We observe that if $z\not\in\bb{R}$ then
  \[
    \left|\operatorname{Im}g(z)\right|=
    |\operatorname{Im} z|\int_{0}^{\infty}\frac{\phi(t)dt}{|t+z|^{2}}<\int_{\bb{R}}\frac{|\operatorname{Im} z|dt}{|t+z|^{2}}=\pi.
  \]
  Thus, for all $z\in\mathbb{C}\setminus(-\infty,0]$,
\[
  S(\log z)=\log f(z)-\frac{1}{2}\log z=\log C -\frac{1}{2}\log z+
  g(z).
\]
Observing that
\[
  \int_{0}^{\infty}\left(\frac{t}{1+t^{2}}-\nth{t+z}\right)dt=\log z,
\]
we obtain 
\begin{equation}
    \label{altSRepr}
S(\log z)=\log C+\int_{0}^{\infty}\left(\frac{t}{1+t^{2}}-\nth{t+z}\right)\psi(t)dt,\quad ||\psi||_{\infty}\le\hf,
\end{equation}
where $\psi(t)=\phi(t)-1/2$.
For $S$ to be in $\CW_{p}$ we also need $S(0)=0$, $S(-w)=S(w)$ and $S(w+p)=S(w)$.
Clearly, $S(0)=0$ is equivalent to
\[
\log C=-\int_{0}^{\infty}\left(\frac{t}{1+t^{2}}-\nth{t+1}\right)\psi(t)dt.
\]
Hence,
\begin{equation*}
  S(\log z)=\int_{0}^{\infty}\left(\frac{1}{1+t}-\nth{t+z}\right)\psi(t)dt
=\int_{0}^{\infty}\frac{(z-1) \psi(t)}{(t+1)(t+z)}dt,
\end{equation*}
and $S(-w)=S(w)$ is equivalent to
\begin{equation}
  \label{odd}
  \int_{0}^{\infty}\frac{(z-1) \psi(t)}{(t+1)(t+z)}dt=\int_{0}^{\infty}\frac{(1-z) \psi(t)}{(t+1)(zt+1)}dt.
\end{equation}
Making a change of variables $s=1/t$ in the integral on the \rhs\ in (\ref{odd}), and denoting $\Tld{\psi}(s)=\psi(1/s)$, equation (\ref{odd}) becomes
\[
\int_{0}^{\infty}\frac{\psi(t)}{(t+1)(t+z)}dt=-\int_{0}^{\infty}\frac{\Tld{\psi}(s)}{(s+1)(z+s)}ds,
\]
which implies $\psi(t)=-\Tld{\psi}(t)=-\psi(1/t)$ for a.e.\ $t\in (0,\infty)$ (by Theorem \ref{th:HerglotzIntReprWithImplicitSokhotskiPlemeljFormulas}).
Hence, writing
\[
S(\log z)=\int_{0}^{1}\frac{(z-1) \psi(t)}{(t+1)(t+z)}dt+\int_{1}^{\infty}\frac{(z-1) \psi(t)}{(t+1)(t+z)}dt,
\]
and changing the variable of integration $s=1/t$ in the second term, we obtain
\[
    S(\log z)=\int_{0}^{1}\frac{(z-1) \psi(t)}{(t+1)(t+z)}dt-\int_{0}^{1}\frac{(z-1) \psi(s)}{(s+1)(1+zs)}ds.
\]
Equivalently, we can write, using partial fraction decomposition,
\begin{equation}
  \label{S}
S(\log z)=\int_{0}^{1}\left(\frac{2}{t+1}-\frac{1}{t+z}-\frac{z}{1+tz}\right)\psi(t)dt.
\end{equation}
Finally, $S(w+p)=S(w)$ is equivalent to
\[
\int_{0}^{1}\left(\frac{1}{t+z}+\frac{z}{1+tz}\right)\psi(t)dt=\int_{0}^{1}\left(\frac{1}{t+c^{2}z}+\frac{c^{2}z}{1+tc^{2}z}\right)\psi(t)dt,
\]
where $p=2\log c$.
Putting everything on one side, we obtain
\begin{equation}
  \label{eqn} \int_{0}^{1}\left(\frac{1}{t+c^{2}z}+\frac{c^{2}z}{1+tc^{2}z}-\frac{1}{t+z}-\frac{z}{1+tz}\right)\psi(t)dt=0,
  \quad\forall z\in\bb{C}\setminus(-\infty,0].
\end{equation}
To understand (\ref{eqn}) we want to rewrite it, so that each term
above is a Cauchy-type integral. We accomplish this by
changing variables $t=c^{2}s$ in the first term, and $t=c^{-2}s$, in the second.
Then, upon switching $z$ to $-z$, we obtain the following identity on $\bb{C}\setminus[0,\infty)$:
\begin{equation*}
  F(z):=F_1(z)+F_2(z)=0,
\end{equation*}
where 
\begin{equation*}
    F_1(z)=\int_{0}^{1}\frac{\psi(c^{2}s)\chi_{[0,c^{-2}]}(s)-\psi(s)}{s-z}ds,\;F_2(z)=\int_{0}^{c^{2}}\frac{\psi(sc^{-2})-\psi(s)\chi_{[0,1]}(s)}{s-z^{-1}}ds.
\end{equation*}
It now follows from this (and by Theorem \ref{th:HerglotzIntReprWithImplicitSokhotskiPlemeljFormulas})
that for a.e.\ $x\in(0,\infty)$:
\begin{equation}
    \label{per}0=\frac{1}{\pi}\lim_{y\downarrow 0}\operatorname{Im}F(x+iy)=\begin{cases}
  \psi(xc^{2})-\psi(x),&x\in(0,c^{-2}),\\
  -\psi(x)-\psi((c^{2}x)^{-1}), &x\in(c^{-2},1),\\
  \psi(x^{-1})-\psi((c^{2}x)^{-1}), &x\in(1,\infty).\\
  \end{cases}
\end{equation}
Now define $\Psi(\Gl)=\psi(e^{\Gl})$ for $\Gl\in\bb{R}$.
Then, writing $p=2\log c$, we obtain from \eqref{per} that the following equations hold for a.e.\ $\lambda\in \mathbb{R}$:
\begin{equation}
  \label{G}
    \begin{cases}
    \Psi(\Gl+p)=\Psi(\Gl),&\Gl<-p,\\
    \Psi(\Gl)=-\Psi(-\Gl-p),&-p<\Gl<0,\\
    \Psi(-\Gl)=\Psi(-\Gl-p),&\Gl>0.
  \end{cases}
\end{equation}
Recalling that $\psi(x)=-\psi(1/x)$, for a.e. $x>1$, we obtain $\Psi(\Gl)=-\Psi(-\Gl)$ for a.e.\ $\Gl>0$.
It follows that $\Psi\in L^{\infty}(\bb{R})$ is a real-valued, odd, $p$-periodic function with $\|\Psi\|_{\infty}\le 1/2$. 
Finally, by substituting in $z=e^w, w\in D$ into \eqref{odd} and changing variables $t=e^{\lambda}$, we arrive at the representation \eqref{SGint} for $S(w)$.

We now claim that any such function $\Psi(\Gl)$ implies that $F(z)=0$ for all $z\in\bb{C}\setminus[0,\infty)$. Indeed, changing variables under the integral $s=e^{\Gl}$, we obtain
\[
  F(z)=\int_{-\infty}^{0}\frac{\Psi(\Gl+p)\chi_{(-\infty,-p)}(\Gl)-\Psi(\Gl)}{1-e^{-\Gl}z}d\Gl
  +\int_{-\infty}^{p}\frac{\Psi(\Gl-p)-\Psi(\Gl)\chi_{(-\infty,0)}(\Gl)}{1-e^{-\Gl}z^{-1}}d\Gl.
\]
Using the properties of the function $\Psi(\Gl)$ we obtain
\[
  F(z)=-\int_{-p}^{0}\frac{\Psi(\Gl)}{1-e^{-\Gl}z}d\Gl+\int_{0}^{p}\frac{\Psi(\Gl)}{1-e^{-\Gl}z^{-1}}d\Gl
\]
Changing variables $\Gl=-\Gl'$ in the first integral and using the fact that $\Psi(\Gl)$ is odd, we obtain
\[
F(z)=  \int_{0}^{p}\Psi(\Gl)\left(\nth{1-e^{\Gl}z}+\nth{1-(e^{\Gl}z)^{-1}}\right)d\Gl=\int_{0}^{p}\Psi(\Gl)d\Gl.
\]
Since $\Psi(\Gl)$ is odd and $p$-periodic we have
\[
\int_{0}^{p}\Psi(\Gl)d\Gl=\int_{-p/2}^{p/2}\Psi(\Gl)d\Gl=0.
\]
This proves the claim. We can also conclude from this that for any $\Psi\in L^{\infty}(\mathbb{R})$ which is a real-valued, odd, $p=2\log c$-periodic function such that $||\Psi|||_{\infty}\le 1/2$, the function
\begin{equation}
  \label{f}
  f(z)=\sqrt{z}\exp\left\{(z-1)\int_{\bb{R}}\frac{\Psi(\Gl)}{(1+e^{\Gl})(1+e^{-\Gl}z)}d\lambda\right\},\;\;z\in \mathbb{C}\setminus (-\infty,0]
\end{equation}
is in the class $\CM_{c}$ and the function $S(w)$ given by (\ref{SGint}) belongs to $\CW_p$.

Finally, given $S(w)$ in $\CW_{p}$ we can compute the corresponding
spectral function $\Psi(\Gl)=\psi(e^{\Gl})$ using the representation (\ref{altSRepr}):
\[
-S(\log(-z))=-\log C+\int_{0}^{\infty}\left(\nth{t-z}-\frac{t}{1+t^{2}}\right)\psi(t)dt,\;\;z\in \mathbb{C}\setminus[0,\infty).
\]
Then it follows from this (by Theorem \ref{th:HerglotzIntReprWithImplicitSokhotskiPlemeljFormulas}) that
\begin{equation*}
     \psi(x)=\frac{-1}{\pi}\lim_{\eta\rightarrow 0^+}\operatorname{Im}S(\log(-xe^{i\eta}))=\frac{-1}{\pi}\lim_{\eta\rightarrow 0^+}\operatorname{Im}S(\log x+i(\eta-\pi))=\frac{1}{\pi}\lim_{\eta\rightarrow 0^+}\operatorname{Im}S(\log x+i(\pi-\eta))
\end{equation*}
for a.e.\ $x\in (0,\infty)$, where we also used the identity $\bra{S(w)}=S(\bra{w})$ for all $w\in D$. 
Hence,
\begin{equation}
  \label{g}
    \psi(x)=\frac{1}{\pi}\lim_{\mu\to\pi^{-}}\operatorname{Im} S(i\mu+\log x)
  \end{equation}
  for a.e.\ $x\in (0,\infty)$, from which formula \eqref{S2G} now follows.
\end{proof}
Theorem~\ref{th:Wpintrep} shows that all classes $\CM_{c}$, $c>1$ have infinitely many members. However, the integral representation (\ref{SGint}) does not permit us to exhibit functions in $\CM_{c}$, or equivalently, in $\CW_{p}$, explicitly. This is remedied by our next result.
\begin{Thm}
  \label{th:Fourier}
  Suppose
  \begin{equation}
    \label{PsiFourier}
    \Psi(\Gl)=\sum_{n=1}^{\infty}B_{n}\sin(an\Gl) 
  \end{equation}
is a Fourier series of a real, odd, $p=2\pi/a$-periodic function $\Psi(\Gl)$, satisfying $|\Psi(\Gl)|\le1/2$. Then, the corresponding function $S\in\CW_{p}$ in (\ref{SGint}) is given by the Fourier series
\begin{equation}
  \label{SGF}
  S(w)=\pi\sum_{n=1}^{\infty}B_{n}\frac{1-\cos(awn)}{\sinh(a\pi n)}=2\pi \sum_{n=1}^{\infty}B_{n}\frac{\sin^{2}\left(\frac{awn}{2}\right)}{\sinh(a\pi n)}.
\end{equation}
\end{Thm}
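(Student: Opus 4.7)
The plan is to substitute the Fourier series of $\Psi$ into the representation (\ref{SGint}), exchange the sum and the integral, and evaluate each resulting $\Gl$-integral in closed form. Writing $S(w)=\sum_{n=1}^{\infty}B_{n}I_{n}(w)$ with
\[
I_{n}(w)=\hf\int_{\bb{R}}\frac{\sin(an\Gl)\sinh(w/2)\,d\Gl}{\cosh(\Gl/2)\cosh((w-\Gl)/2)},
\]
the theorem reduces to showing $I_{n}(w)=\pi(1-\cos(awn))/\sinh(a\pi n)$. The sum–integral swap is to be justified first for real $w$ by approximating $\Psi$ in a bounded, almost-everywhere sense by its Fej\'er means (trigonometric polynomials dominated by $\|\Psi\|_{\infty}\le 1/2$) and invoking the dominated convergence theorem against the $L^{1}(\bb{R})$ kernel $\sinh(w/2)/[\cosh(\Gl/2)\cosh((w-\Gl)/2)]$; the resulting identity then extends to all $w\in D$ by analytic continuation, since the series on the right of (\ref{SGF}) converges uniformly on compact subsets of $D$ thanks to the exponential decay $1/\sinh(a\pi n)\sim 2e^{-a\pi n}$ dominating the $O(e^{an|\operatorname{Im}w|})$ growth of the numerator.

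To evaluate $I_{n}(w)$ I would first apply the product-to-sum identity
\[
2\cosh(\Gl/2)\cosh((w-\Gl)/2)=\cosh(w/2)+\cosh(\Gl-w/2),
\]
then change variables $\mu=\Gl-w/2$. Writing $\sin(an\Gl)=\sin(an\mu)\cos(anw/2)+\cos(an\mu)\sin(anw/2)$ and discarding the part odd in $\mu$ yields
\[
I_{n}(w)=\sinh(w/2)\sin(anw/2)\int_{\bb{R}}\frac{\cos(an\mu)\,d\mu}{\cosh(w/2)+\cosh(\mu)}.
\]

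The crux of the argument is the explicit evaluation of this last integral, which I would carry out by contour integration of $e^{ian\mu}/(\cosh(w/2)+\cosh(\mu))$ along the boundary of the rectangle with vertices $\pm R,\ \pm R+2\pi i$, and sending $R\to\infty$. The $2\pi i$-periodicity of $\cosh$ collapses the contribution of the upper edge into $e^{-2\pi an}$ times the lower, producing a prefactor $1-e^{-2\pi an}=2e^{-\pi an}\sinh(a\pi n)$; inside the rectangle the only poles are the simple ones at $\mu=\pm w/2+i\pi$, where $\cosh(\mu)=-\cosh(w/2)$ and $\sinh(\mu)=\mp\sinh(w/2)$, with residues $\mp e^{\pm ianw/2}e^{-\pi an}/\sinh(w/2)$. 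Summing them gives
\[
\int_{\bb{R}}\frac{\cos(an\mu)\,d\mu}{\cosh(w/2)+\cosh(\mu)}=\frac{2\pi\sin(anw/2)}{\sinh(a\pi n)\sinh(w/2)},
\]
hence $I_{n}(w)=2\pi\sin^{2}(anw/2)/\sinh(a\pi n)$. The double-angle identity $2\sin^{2}x=1-\cos 2x$ then converts this to the form in (\ref{SGF}), and summation in $n$ completes the proof.

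The main obstacle is the contour computation: the signs arising from $\sinh(\pm w/2+i\pi)=\mp\sinh(w/2)$ must be tracked carefully, and one must verify the decay of the vertical-side contributions as $R\to\infty$. Everything else—the Fej\'er approximation justifying the termwise integration, the elementary trigonometric manipulations, and the analytic continuation in $w$—is routine.
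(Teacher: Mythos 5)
Your proposal is correct, but it proves the formula by a genuinely different route than the paper. You substitute the Fourier series into \eqref{SGint}, justify the interchange of sum and integral via Fej\'er means, bounded convergence and analytic continuation, and then evaluate the single-harmonic integral
$\int_{\bb{R}}\cos(an\mu)\,d\mu/(\cosh(w/2)+\cosh\mu)=2\pi\sin(anw/2)/(\sinh(a\pi n)\sinh(w/2))$
by residues on the period rectangle with vertices $\pm R$, $\pm R+2\pi i$; your bookkeeping (the prefactor $1-e^{-2\pi an}=2e^{-\pi an}\sinh(a\pi n)$, the poles at $\pm w/2+i\pi$ with $\sinh(\pm w/2+i\pi)=\mp\sinh(w/2)$, and the vanishing vertical sides) checks out and indeed yields $I_n(w)=2\pi\sin^2(anw/2)/\sinh(a\pi n)$, hence \eqref{SGF}. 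The paper never computes this integral: it instead guesses the candidate $S_n(w)=A_n(1-\cos(awn))$, shows $S_n\in\CW_p$ by the convexity bound $\sinh(a\mu n)\le \mu\sinh(a\pi n)/\pi$ on $[0,\pi]$ (which fixes $A_n=\pi/(2\sinh(a\pi n))$), recovers the boundary function $\Psi_n(\Gl)=\tfrac12\sin(a n\Gl)$ from the inversion formula \eqref{S2G}, and then invokes the bijectivity of $\Psi\mapsto S$ from Theorem~\ref{th:Wpintrep} together with linearity. Your approach is more computational and self-contained (it does not lean on \eqref{S2G} or the bijection), and as a by-product gives the explicit kernel integral; the paper's approach is shorter and avoids contour integration by exploiting the structure already established in Theorem~\ref{th:Wpintrep}. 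In fact your treatment of the passage from single harmonics to the full series (Fej\'er approximation dominated by $\|\Psi\|_\infty\le 1/2$, then locally uniform convergence of the series in $D$ and the identity theorem) is more detailed than the paper's one-line appeal to linearity of \eqref{SGint}; just make sure to state that the termwise identity is first proved for $w\in\bb{R}$ (where the oddness argument discarding the $\sin(an\mu)\cos(anw/2)$ term is valid) and only then continued analytically, and to note that the degenerate case $w=0$ (where the two poles merge) is covered by continuity.
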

\begin{proof}
  We begin with the observation that the functions $S_{n}(w)=A_{n}(1-\cos(awn))$, $n\in\bb{N}$, are entire, even, $p=2\pi/a$-periodic, and satisfy $S_{n}(0)=0$. Thus, $S_{n}\in\CW_{p}$, if we can show that $S_{n}(w)$ has property (i) in Definition~\ref{def:WClassOfFunctions}. We compute
  \[
\Im S_{n}(w)=A_{n}\sin(a\Gl n)\sinh(a\mu n),\quad w=\Gl+i\mu.
\]
Since $\sinh(a\mu n)$ is convex on $\mu\in[0,\pi]$ we have
\[
\sinh(a\mu n)\le\frac{\sinh(a\pi n)}{\pi}\mu.
\]
Since, $\sin(a\mu n)$ is an odd function of $\mu$, we conclude that
\[
|\Im S_{n}(w)|\le |A_{n}|\frac{\sinh(a\pi n)}{\pi}|\Im\,w|.
\]
Thus, choosing $A_{n}=\pi/(2 \sinh(a\pi n))$, we obtain $S_{n}\in\CW_{p}$. Formula (\ref{S2G}) gives
\[
\Psi_{n}(\Gl)=\frac{1}{2 \sinh(a\pi n)}\lim_{\mu\to\pi^{-}}\sin(a\Gl n)\sinh(a\mu n)=\hf\sin(a\Gl n).
\]
By linearity of the representation (\ref{SGint}) we conclude that if $\Psi$ is given by (\ref{PsiFourier}), then the corresponding $S(w)$ must be given by (\ref{SGF}). By Theorem~\ref{th:Wpintrep}, if $|\Psi(\Gl)|\le 1/2$, then $S\in\CW_{p}$.
\end{proof}

\section{Conclusions}\label{sec:conclusion}
From our results, we are able to make several important conclusions.
One is a corollary of Theorem~\ref{th:Fourier} that provides a sequence of explicit non-geometric Moln{\'a}r means, corresponding to $\Psi(\Gl)=\pm(1/2)\sin(\pi\Gl/\log c)$:
\begin{equation}
f_{n}(x)=\sqrt{x}\exp\left\{\frac{\pi\sin^{2}\left(\frac{\pi n\log x}{2\log c}\right)}{\sinh\left(\frac{\pi^{2}n}{\log c}\right)}\right\}\in\CM_{c},\quad n\in\bb{Z}\setminus\{0\},\label{SeqNonGeomMolnarMeans}
\end{equation}
or recalling that the Moln{\'a}r class $\CM$ is the union of all $\CM_c$, the family
\begin{equation}
f_{\Ga}(x)=\sqrt{x}\exp\left\{\frac{\pi\sin^{2}(\Ga\log x)}{\sinh(2\pi\Ga)}\right\}\in\CM,\quad\Ga\in\bb{R}\setminus\{0\}.\label{FamNonGeomMolnarMeans}
\end{equation}

Second, we observe that any real, odd, $p$-periodic function $\Psi(\Gl)$, satisfying $|\Psi(\Gl)|\le1/2$ is uniquely determined by its restriction to the half-period interval $(0,p/2)$. Using the symmetries of $\Psi(\Gl)$, we can rewrite the exponential-integral representation (\ref{f}) as 
\begin{equation}
  \label{fEllip}
  f(z)=\sqrt{z}\exp\left\{\int_{0}^{p/2}\Psi(\Gl)E_{p}(\Gl;z)d\Gl\right\},\quad\Psi\in B(0,1/2)\subseteq 
  L^{\infty}(0,p/2),
\end{equation}
where for each fixed $z\in\bb{C}\setminus(-\infty,0]$,
\begin{equation}
  E_{p}(\Gl;z)=\sum_{n\in\bb{Z}}\frac{(z-1)^{2}(e^{\Gl+pn}-1)}{(e^{\Gl+pn}+1)(z+e^{\Gl+pn})(z+e^{-(\Gl+pn)})},
  \label{Ep}
\end{equation}
and where $B(0,1/2)\subseteq  L^{\infty}(0,p/2)$ denotes the closed ball centered at 0 and radius 1/2 in the Banach space $L^{\infty}(0,p/2)$.
It is evident from formula (\ref{Ep}) that $\Gl\mapsto E_p(\Gl;z)$ is an odd elliptic function with periods $p$ and $2\pi i$ and three poles at $\pi i$, $\pm\log(-z)$ modulo periods in each period cell with residues $2$, $-1$ and $-1$, respectively, provided these three points don't have congruent pairs modulo periods, i.e., $z\not=e^{pk/2}$, $k\in\bb{Z}$. 
The classical theory of elliptic functions gives the decomposition of $E_p(\Gl;x)$ in terms of the Weierstrass $\Gz$-functions (see, e.g., \cite[\S 14]{90AK})
\begin{equation}
  \label{Epzeta}
E_p(\Gl;z)=2\Gz(\Gl+i\pi)-\Gz(\Gl+i\pi+\log z)-\Gz(\Gl+i\pi-\log z),\quad x>0,
\end{equation}
where $\Gz(u)$ is the Weierstrass $\Gz$-function with periods $p$ and $2\pi i$. This formula shows that $E_{p}(\Gl;z)=0$, when $z=e^{pk}$, $k\in\bb{Z}$. If $z=e^{p/2}e^{kp}=c e^{kp}$, $k\in\bb{Z}$, then $E_p(\Gl;z)$ has only two poles at $\pi i$ and $\pi i+p/2$ in the rectangle of periods. The residues at these poles are 2 and $-2$, respectively. Therefore,
\begin{equation}
  \label{Epc}
  E_p(\Gl;c)=\frac{2\sqrt{m}K'(m)}{\pi}{\rm sn}\left(\frac{K'(m)\Gl}{\pi},m\right),\quad e^{\frac{2\pi K(m)}{K'(m)}}=c,
\end{equation}
where ${\rm sn}(u,m)$ is a Jacobi elliptic sine function, $K(m)$ is a complete elliptic integral of the first kind, and $K'(m)=K(1-m)$. Formula (\ref{Epc}) was obtained by matching the poles at $iK'$ and $2K+iK'$ of the Jacobi elliptic sine to the poles of $E_{p}(\Gl;c)$ and rescaling the residues $\pm1/\sqrt{m}$ of ${\rm sn}(u,m)$ at the poles to match the residues $\pm2$ of $E_{p}(\Gl;c)$. The parameter $m$ defined by the second equation in (\ref{Epc}) was chosen to match the ratio of the two periods of ${\rm sn}(u,m)$ and $E_p(\Gl;c)$.
\begin{Lem}
  \label{lem:Ep}
  When $z\not=e^{pk/2}$, $k\in\bb{Z}$, the elliptic function $\Gl\mapsto E_p(\Gl;z)$ has exactly three simple zeros at $\Gl=0$, $p/2$ and $p/2+\pi i$. Moreover, the Fourier coefficients of the sine series of $E_p(\Gl;z)$ regarded as an odd $p$-periodic function of $\Gl$ are
\begin{equation}
  \label{fc}
S_n(w)=\int_{0}^{p/2}E_{p}(\Gl;e^{w})\sin\left(\frac{2\pi n\Gl}{p}\right)d\Gl=2\pi \frac{\sin^{2}\left(\frac{\pi n w}{p}\right)}{\sinh\left(\frac{2\pi^{2}n}{p}\right)}.
\end{equation}
\end{Lem}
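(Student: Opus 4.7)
The plan is to prove the two assertions in sequence: the zero structure first, by an elliptic-function count combined with symmetry, and the Fourier coefficient formula second, by equating two closed-form expressions for the same $S\in\CW_{p}$.

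For the zeros, I would begin from the Weierstrass decomposition (\ref{Epzeta}), which exhibits $\lambda\mapsto E_{p}(\lambda;z)$ as an elliptic function with period lattice $\Lambda=p\bb{Z}+2\pi i\bb{Z}$ whose three simple poles in a fundamental cell lie at $i\pi$, $\log(-z)$, and $-\log(-z)$ with residues $2,-1,-1$. The hypothesis $z\ne e^{pk/2}$ is exactly what rules out any pair of these pole locations being $\Lambda$-congruent, so $E_{p}(\cdot;z)$ has order $3$ and therefore exactly three zeros in the cell, counted with multiplicity. To identify them I would invoke the oddness $E_{p}(-\lambda;z)=-E_{p}(\lambda;z)$ (a direct consequence of (\ref{Ep}) after reindexing $n\mapsto -n$) combined with the two periodicities. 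Oddness at the origin gives $E_{p}(0;z)=0$. For each half-period $\omega\in\{p/2,\,i\pi,\,p/2+i\pi\}$, oddness plus periodicity yields $E_{p}(\omega;z)=-E_{p}(-\omega;z)=-E_{p}(\omega;z)$, forcing either $E_{p}(\omega;z)=0$ or $\omega$ to be a pole. The point $i\pi$ is already a pole, while $0$, $p/2$, and $p/2+i\pi$ are non-pole points under the hypothesis, so these three distinct points account for all three zeros and each must be simple. As a sanity check, Abel's relation $\sum(\text{zeros})\equiv\sum(\text{poles})\pmod{\Lambda}$ holds, since $0+p/2+(p/2+i\pi)=p+i\pi\equiv i\pi$.

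For the Fourier coefficient identity, I would match the integral representation (\ref{fEllip}), namely
\begin{equation*}
S(w)=\int_{0}^{p/2}\Psi(\lambda)\,E_{p}(\lambda;e^{w})\,d\lambda,
\end{equation*}
against the sine series (\ref{SGF}) from Theorem~\ref{th:Fourier}. Both compute the same $S\in\CW_{p}$ in terms of its density $\Psi$. Specializing to the single Fourier mode $\Psi(\lambda)=\tfrac{1}{2}\sin(2\pi n\lambda/p)$ (which is real, odd, $p$-periodic, and bounded by $1/2$, hence admissible), the first expression gives $\tfrac{1}{2}S_{n}(w)$ with $S_{n}(w)$ as defined in the lemma, while the second, applied with $a=2\pi/p$ and the single nonzero coefficient $B_{n}=1/2$, gives $\pi\sin^{2}(\pi nw/p)/\sinh(2\pi^{2}n/p)$. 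Equating the two and multiplying by $2$ yields the stated identity for real $w$; both sides then extend to $w\in D$ by analyticity.

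The main technical hurdle is the zero--pole bookkeeping: verifying that, under the precise hypothesis $z\ne e^{pk/2}$, none of the three candidate zeros in $\{0,p/2,p/2+i\pi\}$ collides with any of the three poles in $\{i\pi,\pm\log(-z)\}$, and that the three poles themselves are pairwise distinct, all modulo $\Lambda$. Each such congruence reduces to an equation of the form $z=\pm e^{pj/2}$ for some $j\in\bb{Z}$; the minus sign would put $z$ on the negative real axis (excluded from the domain of $E_{p}$), while the plus sign is precisely what the hypothesis forbids. The Fourier part presents no substantive obstacle beyond confirming that the symmetrization argument taking (\ref{f}) to (\ref{fEllip}) is valid, which follows readily from the $L^{\infty}$ bound on $\Psi$.
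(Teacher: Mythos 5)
Your proposal is correct, and its two halves relate to the paper differently. The Fourier-coefficient part is essentially the paper's own argument: both specialize Theorem~\ref{th:Fourier} to the single mode $\Psi(\Gl)=\frac{1}{2}\sin(2\pi n\Gl/p)$ (i.e.\ $a=2\pi/p$, $B_{n}=1/2$) and compare with the half-period representation (\ref{fEllip}) to read off (\ref{fc}). The zero count, however, follows a genuinely different route. The paper never evaluates $E_{p}$ at the half-periods: it runs a case analysis on the possible zero configurations of an order-3 elliptic function (one triple zero, a double plus a simple, three simple), eliminating all but the last by combining oddness of $E_{p}(\cdot;z)$ with Abel's relation that the zeros must sum to $\pi i$ modulo the lattice, and then pins the two nonzero zeros down among the solutions of $2\Gl\equiv 0$ as $p/2$ and $p/2+\pi i$ because $\pi i$ is a pole. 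You instead use the standard fact that an odd elliptic function vanishes at every half-period that is not a pole: oddness plus periodicity force $E_{p}(\omega;z)=-E_{p}(\omega;z)$ at $\omega=0,\,p/2,\,p/2+\pi i$, and since the hypothesis $z\neq e^{pk/2}$ keeps the three simple poles pairwise non-congruent, the order-3 count makes these three distinct zeros simple and exhaustive. Your version is shorter and more transparent; its only extra cost is the zero--pole collision check you flag, which indeed reduces in every case to $z=\pm e^{pj/2}$ (the minus sign excluded because $z\in\bb{C}\setminus(-\infty,0]$, the plus sign by hypothesis), whereas the paper's sum-rule argument gets the locations without testing $p/2$ and $p/2+\pi i$ for poles separately---and your Abel-relation ``sanity check'' is precisely the identity the paper uses as its main engine. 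Both arguments need the same hypothesis for the same reason (to guarantee order 3), and both are complete.
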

\begin{proof}
  Since $E_{p}(\Gl;z)$ has exactly 3 poles, it must have exactly 3 zeros, counting multiplicity, in the period rectangle. Moreover, by the Liouville theorem the sum of all three zeros, counting multiplicity, must be equal to the sum of poles, i.e., to $\pi i$, modulo periods. If $E_{p}(\Gl;z)$ has a single zero $\Gl_{0}$ of multiplicity 3, then $\Gl_{0}=0$, since $E_{p}(\Gl;z)$ is an odd function. But then, the sum of all zeros would be $0\not=\pi i$. If  
$E_{p}(\Gl;z)$ has a double-zero at $0$ and a simple zero at $\Gl_{1}\not=0$, then, by the Liouville theorem,  $\Gl_{1}=\pi i$ modulo periods, which is impossible, since $\pi i$ is a pole of $E_{p}$. If $0$ is a simple zero and $\Gl_{1}$ is a double zero, then $-\Gl_{1}$ must also be a double zero ($E_{p}$ is an odd function of $\Gl$). Therefore, $-\Gl_{1}$ must be congruent to $\Gl_{1}$, and
we must have $2\Gl_{1}=0$ modulo periods. But then the sum of all zeros $0+\Gl_{1}+\Gl_{1}=2\Gl_{1}=0\not=\pi i$ modulo periods. Hence, $E_{p}(\Gl;z)$ must have three distinct simple zeros: 0, $\Gl_{1}$ and $\Gl_{2}$. Since $E_{p}$ is odd, $-\Gl_{1}$ is also a zero of $E_{p}$. It is therefore must be congruent to either $\Gl_{1}$ or $\Gl_{2}$. In the latter case $\Gl_{1}+\Gl_{2}=0$ modulo periods, and the sum of all zeros will be $0+\Gl_{1}+\Gl_{2}=0\not=\pi i$ modulo periods. This contradiction shows that
$-\Gl_{1}$ must be congruent to $\Gl_{1}$. Therefore, $-\Gl_{2}$ must be congruent to $\Gl_{2}$. Hence we have $2\Gl_{1}=2\Gl_{2}=0$ and $\Gl_{1}+\Gl_{2}=\pi i$, modulo periods. Equation $2\Gl=0$ has only 3 nonzero solutions in the period rectangle: $\Gl=p/2$, $\Gl=i\pi$, and $\Gl=p/2+i\pi$. This gives $\Gl_{1}=p/2$ and $\Gl_{2}=\pi i+p/2$, since $\pi i$ is a pole.

Formula (\ref{fc}) is an immediate consequence of Theorem~\ref{th:Fourier} that says that
$\Psi(\Gl)=\hf\sin\left(\frac{2\pi n\Gl}{p}\right)$ corresponds to
$S(w)=\pi\frac{\sin^{2}\left(\frac{\pi wn}{p}\right)}{\sinh(2\pi^{2}n)}$, and (\ref{fc}) follows.
\end{proof}
\begin{figure}[t]
  \centering
  \hspace*{-2ex}\includegraphics[scale=0.25]{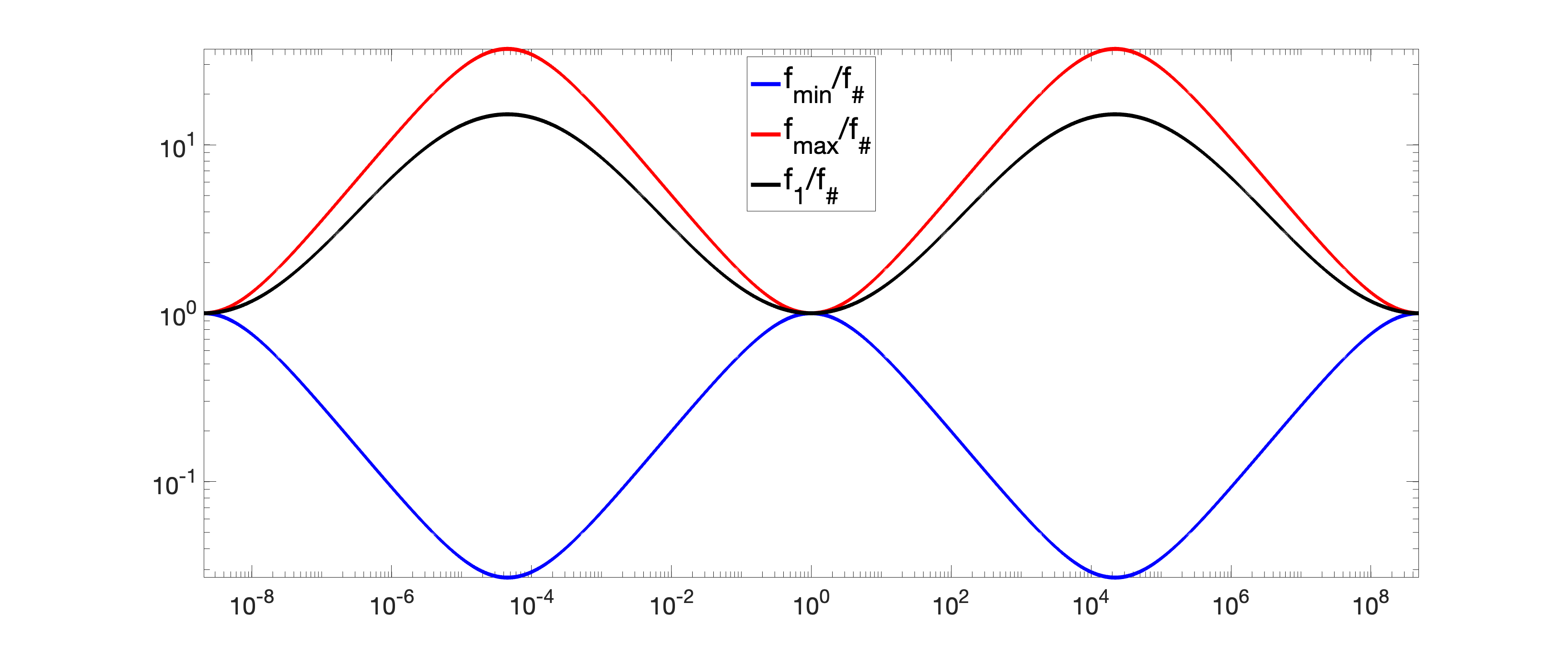}
  \caption[The extremal elements of $\CM_{c}$.]{Graphs of the minimal and the maximal elements of $\CM_{c}$ relative to $f_{\#}(x)=\sqrt{x}$ for $c=e^{10}$, corresponding to $p=20$. All other functions in $\CM_c$, such as $f_1$, given by (\ref{SeqNonGeomMolnarMeans}) with $n=1$, are sandwiched between $f_{\min}$ and $f_{\max}$. 
  }
  \label{fig:fminmax}
\end{figure}
\begin{Thm}[Order structure of the Moln\'ar class]
    \label{thm:order}
   The parametrization 
   \begin{equation}
       \label{Mparam}
       L^\infty(0,p/2)\supseteq B(0,1/2)\ni\Psi\mapsto f\in\CM_c
   \end{equation}
   is order-preserving, and therefore any $f\in\CM_{c}$ lies between the minimal and the maximal elements of $\CM_{c}$ (see Fig.~\ref{fig:fminmax}):
\begin{equation}
    \label{sandwich}
f_{\min}(x)=\sqrt{x}e^{-\frac{1}{2}\int_{0}^{p/2}E_{p}(\Gl;x)d\Gl}\le f(x)\le\sqrt{x}e^{\frac{1}{2}\int_{0}^{p/2}E_{p}(\Gl;x)d\Gl}=f_{\max}(x).
\end{equation}
Moreover,
\begin{gather}
\label{fmin}
f_{\min}(x)=\frac{\sqrt{x}}{\sqrt{m}+1}\left({\rm dn}\left(\frac{K'(m)\log x}{\pi},m\right)+\sqrt{m}{\rm cn}\left(\frac{K'(m)\log x}{\pi},m\right)\right)\in\CM_c,\\
\label{fmax}
f_{\max}(x)=\frac{x}{f_{\min}(x)}=\frac{(\sqrt{m}+1)\sqrt{x}}{{\rm dn}\left(\frac{K'(m)\log x}{\pi},m\right)+\sqrt{m}{\rm cn}\left(\frac{K'(m)\log x}{\pi},m\right)}\in\CM_c,
\end{gather}
where ${\rm cn}(u,m)$ and ${\rm dn}(u,m)$ are the Jacobi elliptic cosine and elliptic delta functions, and $m\in(0,1)$ is the unique solution of $4\pi K(m)/K'(m)=p=2\log c$.
We also have 
\begin{equation}
    \label{minmaxM}
    \inf_{f\in\CM}f(x)=f_{!}(x),\qquad\sup_{f\in\CM}f(x)=f_{\nabla}(x).
\end{equation}
where $f_{\nabla}$ and $f_{!}$ are defined in (\ref{fnabla}) and (\ref{f!}), respectively.
\end{Thm}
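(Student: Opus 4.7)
The plan has three main parts. The linchpin throughout is the nonnegativity of the elliptic kernel $E_{p}(\Gl;x)$ for $\Gl\in(0,p/2)$ and $x>0$, which I will establish first. By Lemma~\ref{lem:Ep}, for $x\neq e^{pk/2}$ the only real zeros of $\Gl\mapsto E_{p}(\Gl;x)$ are the endpoints $\Gl=0$ and $\Gl=p/2$, so its sign is constant on $(0,p/2)$. The evenness $S(-w)=S(w)$ and $p$-periodicity $S(w+p)=S(w)$ of each $S_{f}\in\CW_{p}$, combined with the integral representation (\ref{fEllip}), yield the identities $E_{p}(\Gl;1/x)=E_{p}(\Gl;x)$ and $E_{p}(\Gl;e^{p}x)=E_{p}(\Gl;x)$, reducing the question to $x\in[1,c]$. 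Continuity in $x$, together with formula (\ref{Epc}), which gives $E_{p}(\Gl;c)=(2\sqrt{m}\,K'(m)/\pi)\,\mathrm{sn}(K'(m)\Gl/\pi,m)>0$ on $(0,p/2)$, pins the sign as positive. Order-preservation of (\ref{Mparam}) and the sandwich (\ref{sandwich}) then follow at once from (\ref{fEllip}), with $\Psi\equiv\pm 1/2$ producing the extremal elements.

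Next I verify the explicit Jacobi-elliptic formulas (\ref{fmin}) and (\ref{fmax}). Rather than computing $\int_{0}^{p/2}E_{p}(\Gl;x)\,d\Gl$ in closed form through the $\zeta$-representation (\ref{Epzeta}) and the Weierstrass $\sigma$-function (a heavy calculation), I will appeal to uniqueness of $\Psi\mapsto f$ from Theorem~\ref{th:Wpintrep}. Let $\hat f(x)$ denote the \rhs\ of (\ref{fmin}). Elementary Jacobi identities give $\hat f(1)=1$, $\hat f(1/x)=\hat f(x)/x$, and $\hat f(c^{2}x)=c\hat f(x)$ (from parity of $\mathrm{cn},\mathrm{dn}$ and their common period $4K(m)$), while the factorization $(\mathrm{dn}+\sqrt{m}\,\mathrm{cn})(\mathrm{dn}-\sqrt{m}\,\mathrm{cn})=1-m\neq 0$ combined with $\mathrm{dn}>0$ on the reals yields strict positivity of $\hat f$ on $(0,\infty)$ and nonvanishing on $\bb{C}\setminus(-\infty,0]$. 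Then I recover $\Psi_{\hat f}$ via (\ref{S2G}): writing $w=\Gl+i\mu$ and $v=K'(m)\Gl/\pi$, the Jacobi shift identities $\mathrm{cn}(v+iK',m)=-i\,\mathrm{dn}(v,m)/(\sqrt{m}\,\mathrm{sn}(v,m))$ and $\mathrm{dn}(v+iK',m)=-i\,\mathrm{cn}(v,m)/\mathrm{sn}(v,m)$ produce $(\mathrm{dn}+\sqrt{m}\,\mathrm{cn})(v+iK'^{-},m)\to -i(\mathrm{cn}(v)+\mathrm{dn}(v))/\mathrm{sn}(v)$, a negative imaginary for $v\in(0,K(m))$, so $\operatorname{Im}\log(\mathrm{dn}+\sqrt{m}\,\mathrm{cn})\to -\pi/2$ and $\Psi_{\hat f}(\Gl)=-1/2$ on $(0,p/2)$. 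A two-sided maximum-principle argument on the strip $0<\operatorname{Im}w<\pi$, applied to the harmonic functions $\operatorname{Im}S_{\hat f}(w)\pm\tfrac{1}{2}\operatorname{Im}w$ with those boundary values, forces $|\operatorname{Im}S_{\hat f}(w)|\leq\tfrac{1}{2}\operatorname{Im}w$, so $S_{\hat f}\in\CW_{p}$ and $\hat f\in\CM_{c}$. Injectivity of $\Psi\mapsto f$ then identifies $\hat f=f_{\min}$.

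The duality $f_{\max}(x)f_{\min}(x)=x$, equivalent to (\ref{fmax}), is immediate from (\ref{fEllip}) because $\Psi_{\max}=-\Psi_{\min}$ makes the exponentials reciprocal, leaving $\sqrt{x}\cdot\sqrt{x}=x$. For (\ref{minmaxM}), I let $c\to\infty$, equivalently $m\to 1^{-}$, since $p=4\pi K(m)/K'(m)\to\infty$ as $K(m)\to\infty$ and $K'(m)\to\pi/2$. The classical limits $\mathrm{cn}(u,m),\mathrm{dn}(u,m)\to\sech(u)$ and $K'(m)\log x/\pi\to(\log x)/2$ give $\mathrm{dn}+\sqrt{m}\,\mathrm{cn}\to 2\sech((\log x)/2)=4\sqrt{x}/(x+1)$, whence $f_{\min}(x)\to 2x/(x+1)=f_{!}(x)$ and $f_{\max}(x)\to(x+1)/2=f_{\nabla}(x)$. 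The reverse bounds $f_{!}\leq f\leq f_{\nabla}$ for every symmetric Kubo-Ando mean are classical \cite[Theorem 4.5]{80KA}, giving the matching inequalities.

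The principal obstacles are (a) establishing the sign of $E_{p}(\Gl;x)$, where Lemma~\ref{lem:Ep}'s zero classification is essential because the series (\ref{Ep}) is not obviously sign-definite, and (b) the boundary computation $\Psi_{\hat f}\equiv -1/2$, which requires careful branch-tracking of the logarithm as the Jacobi argument enters the imaginary quasi-period $iK'(m)$, together with a maximum-principle step to promote the boundary identity to the interior inequality defining $\CW_{p}$.
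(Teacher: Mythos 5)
Your proposal is correct in outline, but it reaches the closed forms (\ref{fmin})--(\ref{fmax}) by a genuinely different route than the paper. For the order-preservation step the paper also starts from the sign-constancy of $E_p(\Gl;x)$ on $(0,p/2)$ given by Lemma~\ref{lem:Ep}, but it pins the sign as positive by a one-line trick: the $n=1$ Fourier coefficient formula (\ref{fc}) gives $\int_0^{p/2}E_p(\Gl;x)\sin(2\pi\Gl/p)\,d\Gl>0$ for $x\neq e^{kp}$, whereas you argue by the symmetries $E_p(\Gl;1/x)=E_p(\Gl;x)$, $E_p(\Gl;e^px)=E_p(\Gl;x)$ and continuity in $x$ anchored at the explicit value (\ref{Epc}) at $x=c$; both work. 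The real divergence is in identifying $f_{\min}$: the paper \emph{derives} the closed form by expanding $\Psi\equiv 1/2$ in its sine series, summing $S_*(w)=\tfrac12\int_0^{p/2}E_p(\Gl;e^w)\,d\Gl$ term by term via (\ref{fc}), recognizing $S_*'(w)=\tfrac12E_p(w;c)$ through (\ref{Epc}), and integrating ${\rm sn}$ with the classical antiderivative formula; membership in $\CM_c$ is then automatic because $\Psi\equiv\pm1/2$ is admissible in Theorem~\ref{th:Wpintrep}. You instead \emph{verify} the candidate $\hat f$: check the algebraic properties, compute the boundary function $\Psi_{\hat f}\equiv-1/2$ on $(0,p/2)$ via the Jacobi $iK'$-shift identities, and invoke injectivity of $\Psi\mapsto f$. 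This buys you freedom from the Fourier/antiderivative manipulations, but it transfers the whole weight onto showing $S_{\hat f}\in\CW_p$, i.e.\ the strip inequality $|\operatorname{Im}S_{\hat f}(w)|\le\tfrac12\operatorname{Im}w$: your maximum-principle step needs (a) nonvanishing of ${\rm dn}+\sqrt m\,{\rm cn}$ in the strip $0<\operatorname{Im}v<K'$ (your identity $({\rm dn}+\sqrt m\,{\rm cn})({\rm dn}-\sqrt m\,{\rm cn})=1-m$ does give this), (b) a Phragm\'en--Lindel\"of/bounded-harmonic argument that tolerates the logarithmic singularities of $S_{\hat f}$ at the top-edge points $w\equiv i\pi\ (\mathrm{mod}\ p)$ where ${\rm cn},{\rm dn}$ have poles (boundedness of $\operatorname{Im}S_{\hat f}$ does hold there, since the argument of a function with a simple pole stays bounded, but this must be said), and (c) the boundary data on the \emph{entire} top edge, i.e.\ the value $+\pi/2$ on the intervals $(-p/2,0)\ (\mathrm{mod}\ p)$ as well, which follows from $S_{\hat f}(-w)=S_{\hat f}(w)$ and reflection but which you leave implicit, together with the branch-pinning you acknowledge (e.g.\ by tracking $\arg({\rm dn}+\sqrt m\,{\rm cn})$ along the vertical line $v=K+iy$, where the real part stays positive). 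None of these is an obstruction, but they constitute the most technical part of your route and are only sketched; the paper's constructive computation sidesteps them entirely. Your treatment of (\ref{minmaxM}) — the $m\to1^-$ limits of ${\rm cn},{\rm dn}$ to $\sech$ combined with \cite[Theorem 4.5]{80KA} — coincides with the paper's.
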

\begin{proof}
Lemma~\ref{lem:Ep} implies that $E_p(\Gl;x)$, $x>0$, $x\not=e^{kp}$, $k\in\bb{Z}$, is real and does not change sign on $\Gl\in(0,p/2)$. Moreover, $E_p(\Gl;x)>0$, $x>0$, $x\not=e^{kp}$, since $\sin(2\pi\Gl/p)>0$ on $(0,p/2)$ and formula (\ref{fc}) shows that
\[
  \int_{0}^{p/2}E_{p}(\Gl;x)\sin\left(\frac{2\pi\Gl}{p}\right)d\Gl=
  2\pi\frac{\sin^{2}\left(\frac{\pi \log x}{p}\right)}{\sinh\left(\frac{2\pi^{2}}{p}\right)}>0,
\]
when $x\not=e^{kp}$, $k\in\bb{Z}$. This implies that the parametrization (\ref{Mparam}) is order-preserving.

Using the expansion
\[
\hf=\sum_{n=1}^{\infty}\frac{2\sin\left(\frac{2\pi(2n+1)x}{p}\right)}{\pi(2n+1)},\quad x\in(0,p/2),
\]
corresponding to the odd $p$-periodic extension of $\Psi(\Gl)=1/2$ on $(0,p/2)$, we obtain
\[
  S_{*}(w):=\frac{1}{2}\int_{0}^{p/2}E_{p}(\Gl;e^{w})dx=\sum_{n=0}^{\infty}\frac{2}{\pi(2n+1)}
  \int_{0}^{p/2}E_{p}(\Gl;e^{w})\sin\left(\frac{2\pi(2n+1)\Gl}{p}\right)d\Gl.
\]
Using formula (\ref{fc}) we obtain
\[
S_{*}(w)=\sum_{n=0}^{\infty}\frac{2\left(1-\cos\left(\frac{2\pi(2n+1)w}{p}\right)\right)}{(2n+1) \sinh\left(\frac{2\pi^{2}(2n+1)}{p}\right)}.
\]
We observe that
\[
S_{*}'(w)=\frac{4\pi}{p}\sum_{n=0}^{\infty}\frac{\sin\left(\frac{2\pi(2n+1)w}{p}\right)}{ \sinh\left(\frac{2\pi^{2}(2n+1)}{p}\right)}.
\]
It remains to notice that the Fourier sine series coefficients of $S'_{*}(w)$ are exactly the Fourier sine series coefficients of $(1/2)E_{p}(w;c)$. We conclude, using formula (\ref{Epc}), that
\[
  S_{*}'(w)=\hf E_{p}(w;c)=\frac{\sqrt{m}K'(m)}{\pi}{\rm sn}\left(\frac{K'(m)w}{\pi},m\right),
\]
where $m\in(0,1)$ is the unique solution of $4\pi K(m)/K'(m)=p$. Therefore,
\[
S_{*}(w)=\int_{0}^{w}S_{*}'(v)dv=\sqrt{m}\int_{0}^{\frac{K'(m)w}{\pi}}{\rm sn}(u,m)du.
\]
Using the antiderivative formula given in \cite[p.~215]{90AK} we obtain
\[
S_{*}(w)=\log\left(\frac{\sqrt{m}+1}{{\rm dn}\left(\frac{K'(m)w}{\pi},m\right)+\sqrt{m}{\rm cn}\left(\frac{K'(m)w}{\pi},m\right)}\right),
\]
 This proves formulas (\ref{fmin}) and (\ref{fmax}).
In particular, $f_{\min}$ and $f_{\max}$, $m\in(0,1)$ are two other explicit infinite families of representing functions of non-geometric means in $\CM$. Their plots for $p=20$, relative to $f_{\#}$, given by (\ref{fgeom}) are shown in Fig.~\ref{fig:fminmax}.

Since
\[
\lim_{m\to 1^{-}}{\rm cn}(u,m)=\lim_{m\to 1^{-}}{\rm dn}(u,m)=\nth{\cosh u},
\]
we compute
\begin{equation}
  \label{liminmax}
  \lim_{m\to 1^-}f_{\max}(x)=\frac{x+1}{2}=f_{\nabla}(x),\qquad
\lim_{m\to 1^-}f_{\min}(x)=\frac{x}{f_{\nabla}(x)}=f_{!}(x).
\end{equation}
According to \cite[Theorem~4.5]{80KA}, the arithmetic mean is the maximum of all symmetric means, while the harmonic mean is the minimum. Since all Moln\'ar means are symmetric, and in view of (\ref{sandwich}) and (\ref{liminmax}), we have
\begin{gather}
    \label{envmin}
   f_{!}(x)\le\inf_{f\in\CM}f(x)=\inf_{c>1}\min_{f\in\CM_c}f(x)=\inf_{m\in (0,1)}f_{\min}(x)\le f_{!}(x),\\ f_{\nabla}(x)\ge\sup_{f\in\CM}f(x)=\sup_{c>1}\max_{f\in\CM_c}f(x)=\sup_{m\in(0,1)}f_{\max}(x)\ge f_{\nabla}(x).\label{envmax}
\end{gather}
Therefore, all inequalities in (\ref{envmin}) and (\ref{envmax}) are equalities, and (\ref{minmaxM}) is established. Fig.~\ref{fig:minmaxenv} illustrates this observation.
\end{proof}
\begin{figure}[t]
  \centering
  \includegraphics[scale=0.26]{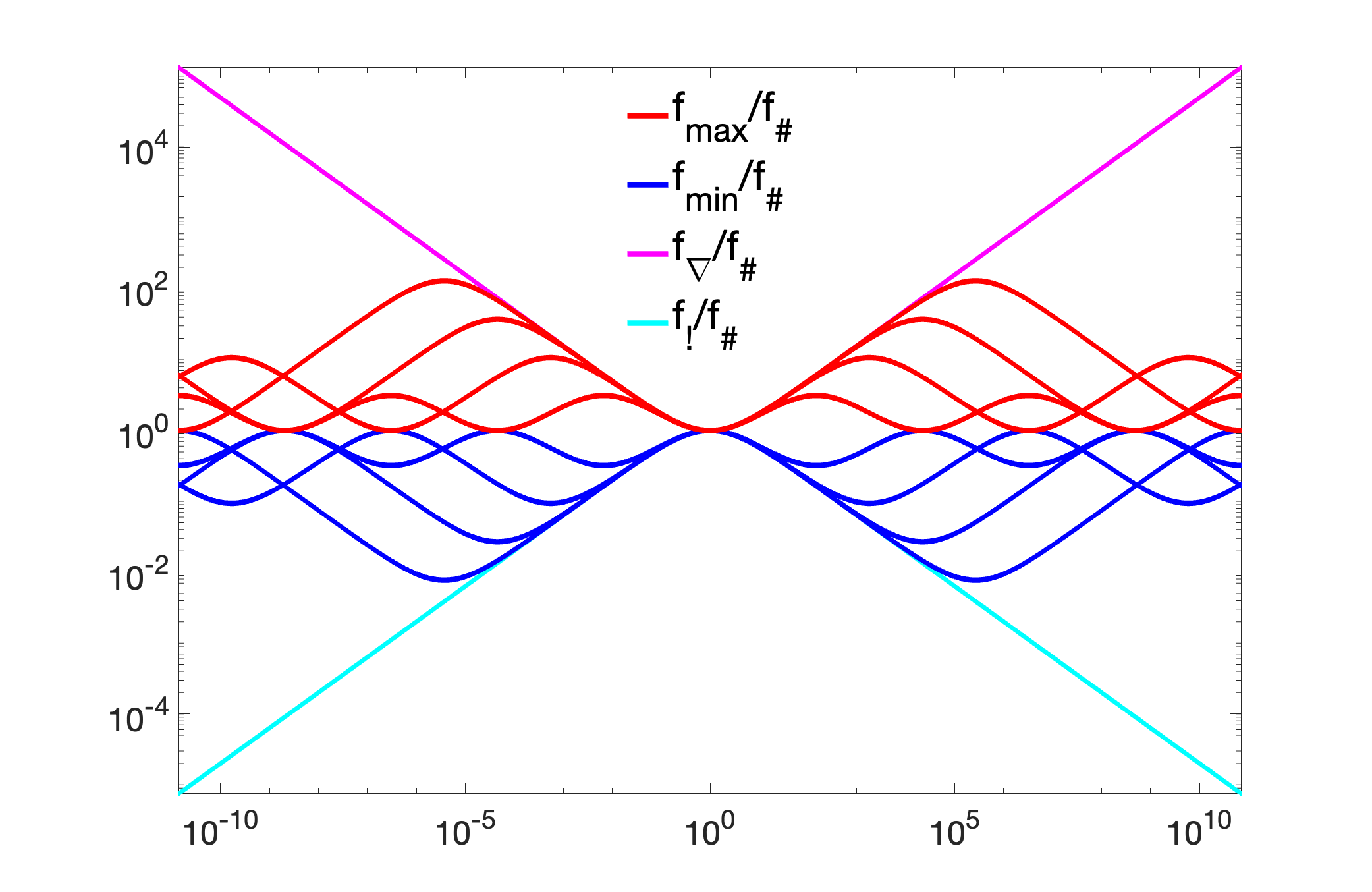}
  \caption[Envelope of extremals.]{$f_{\nabla}$ and $f_{!}$ are the limits of $f_{\max}$ and $f_{\min}$, respectively, as $c\to\infty$. The plots of $f_{\max}/f_{\#}$ and $f_{\min}/f_{\#}$ for $p=10,15,20,25$ are shown.}
  \label{fig:minmaxenv}
\end{figure}

Finally, we conclude the paper with a version of Lemma~\ref{lem:CharMolnarMeansRepresFunc} that uniquely characterizes the geometric mean.
\begin{Thm}[Characterization of the geometric mean]\label{th:CharGeoMean}
    A connection $\sigma$ with representing function $f$ (i.e., $\sigma=\sigma_f$) is a geometric mean $\#$ if and only if all of the following statements hold:
    \begin{itemize}
        \item[(i)] $f\in OM_+$;
        \item[(ii)] $f(1)=1$;
        \item[(iii)] $xf(1/x)=f(x),\text{ for }x>0$;
        \item[(iv)] there exist two logarithmically incommensurate positive scalars $c_1\not=1$, $c_2\not=1$ (i.e., $\frac{\log c_1}{\log c_2}\not\in\bb{Q}$) such that 
        $f\left(c_1^{2}x\right) =c_1f\left( x\right)$ and $f\left(c_2^{2}x\right) =c_2f\left( x\right)$, for $x>0$.
    \end{itemize}
\end{Thm}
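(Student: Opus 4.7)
The ``only if'' direction is immediate: for $f=f_{\#}$, one has $f(c^{2}x)=\sqrt{c^{2}x}=c\sqrt{x}=cf(x)$ for every positive scalar $c$, so any choice of logarithmically incommensurate $c_{1},c_{2}$ works.

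For the ``if'' direction, the plan is to reduce the statement, via the bijection $W:\CM\to\CW$ built in Section~\ref{Sec2}, to a simple rigidity fact about doubly-periodic analytic functions on the strip $D$. Conditions (i)--(iv) say exactly that $f$ is a Moln\'ar function of type $c_{1}$ \emph{and} of type $c_{2}$, i.e., $f\in\CM_{c_{1}}\cap\CM_{c_{2}}$. Applying the proposition at the end of Section~\ref{Sec2}, the corresponding function $S=W(f)$ satisfies $S\in\CW_{p_{1}}\cap\CW_{p_{2}}$ where $p_{j}=2\log c_{j}$; in particular $S:D\to\mathbb{C}$ is analytic, satisfies $S(0)=0$, and admits the two real periods $p_{1}$ and $p_{2}$.

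The key step is to show that $S\equiv 0$. Since $\log c_{1}/\log c_{2}\notin\mathbb{Q}$, the ratio $p_{1}/p_{2}$ is irrational, so the additive subgroup $G=\{mp_{1}+np_{2}:m,n\in\mathbb{Z}\}\subset\mathbb{R}$ is dense in $\mathbb{R}$ by a standard Kronecker-type argument. The restriction $S|_{\mathbb{R}}$ is continuous (in fact real analytic), $G$-invariant, and vanishes at $0$; therefore it vanishes on the dense set $G$ and hence identically on $\mathbb{R}$. Since $S$ is analytic on the connected open set $D\supset\mathbb{R}$, the identity theorem for analytic functions forces $S\equiv 0$ on $D$.

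Finally, invoking the reconstruction formula \eqref{RecovFormulaForfFromS}, we conclude $f(z)=\sqrt{z}\,e^{S(\log z)}=\sqrt{z}=f_{\#}(z)$ on $\mathbb{C}\setminus(-\infty,0]$, so that $\sigma=\sigma_{f_{\#}}=\#$, as desired. The only non-routine ingredient is the density of $G$ in $\mathbb{R}$ under the irrationality hypothesis; this is the classical equidistribution/Kronecker argument and is the natural place where the incommensurability hypothesis is used. Everything else is bookkeeping using the bijections and lemmas already established in Section~\ref{Sec2}.
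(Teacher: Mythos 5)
Your proposal is correct, but it proves the key rigidity step by a different route than the paper. The paper pushes all the way through its explicit representation machinery: conditions (i)--(iv) put $f$ in $\CM_{c_1}\cap\CM_{c_2}$, hence $S\in\CW_{p_1}\cap\CW_{p_2}$, and then Theorem~\ref{th:Wpintrep} identifies $S$ with a measurable, odd, bounded boundary function $\Psi$ which must have the two incommensurate periods $p_1=2\log c_1$, $p_2=2\log c_2$; the paper then asserts that such a $\Psi$ vanishes identically (a measurable function with two incommensurate periods is a.e.\ constant, and oddness kills the constant), whence $S\equiv 0$ and $f=\sqrt{z}$. You instead argue directly on the strip: $S$ is analytic, $S(0)=0$, and the two real periods make $S$ vanish on the dense subgroup $\bb{Z}p_1+\bb{Z}p_2$, so continuity gives $S\equiv 0$ on $\bb{R}$ and the identity theorem gives $S\equiv 0$ on $D$, then \eqref{RecovFormulaForfFromS} yields $f=f_{\#}$. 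Your version buys two things: it bypasses Theorem~\ref{th:Wpintrep} entirely (only the Section~\ref{Sec2} bijection and the recovery formula are needed), and the rigidity fact you invoke --- a \emph{continuous} function with two incommensurate periods and a zero is identically zero --- is elementary via Kronecker density, whereas the paper's corresponding fact for merely \emph{measurable} $\Psi$ is less trivial (it needs, e.g., a Lebesgue-density or Fourier argument that the paper leaves implicit). What the paper's route buys is consistency with its overall narrative, since the statement is presented as a corollary of the $\Psi$-parametrization. One piece of bookkeeping you should make explicit: the proposition at the end of Section~\ref{Sec2} is stated for $c\in(1,\infty)$, so if $c_j\in(0,1)$ you should first replace $c_j$ by $1/c_j$ using Lemma~\ref{lem:WhyWeCanAssumeCGeq1}, noting that this leaves the irrationality of $\log c_1/\log c_2$ intact; with that remark added, your argument is complete.
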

\begin{proof}
Our analysis shows that a connection $\Gs$ satisfying all conditions of the theorem would correspond to an odd periodic function $\Psi(\Gl)$ that has two nonzero incommensurate periods $p_1=2\log c_1$ and $p_2=2\log c_2$. 
Therefore, $\Psi(\Gl)=0$ identically and $f(z)=\sqrt{z}$, corresponding to the geometric mean $\#$. 
\end{proof}

\section*{Acknowledgement}
YG is grateful to the National Science Foundation for support through grant DMS-2305832. GWM is grateful to the National Science Foundation for support through grant DMS-2107926. AW is grateful to the Simons Foundation for the support through grant MPS-TSM-00002799 and to the National Science Foundation for support through grant DMS-2410678. This paper was initiated at CIRM during the conference entitled ``Herglotz-Nevanlinna Functions and their Applications to Dispersive Systems and Composite Materials'', which was held on May 23-27, 2022. The authors wish to thank the organizers of that conference as well as CIRM for hosting us and for their hospitality. Both YG and AW would like thank the support they received from the Department of Mathematics at the University of Utah where a part of this paper was completed while they were on sabbatical from their respective universities.

\def\cprime{$'$} \ifx \cedla \undefined \let \cedla = \c \fi\ifx \cyr
  \undefined \let \cyr = \relax \fi\ifx \cprime \undefined \def \cprime
  {$\mathsurround=0pt '$}\fi\ifx \prime \undefined \def \prime {'}
  \fi
\bibliographystyle{abbrv}
\bibliography{BibMiltonWeltersCharMolnarClassPaper}
\end{document}